\newtheorem{theorem}{Theorem}[section]
\newtheorem{lemma}[theorem]{Lemma}
\newtheorem{proposition}[theorem]{Proposition}
\newtheorem{definition}[theorem]{Definition}
\newtheorem{conjecture}[theorem]{Conjecture}
\newcommand{\e}{\varepsilon}
\newcommand{\f}{\frac}
\newcommand{\wh}{\widehat}
\newcommand{\ti}{\tilde}
\newcommand{\Id}{{\bf 1}}
\newcommand{\bT}{{\bf T}}
\begin{document}

\title{A Stein-Tomas type estimate and a decoupling inequality }
\author{Xiaochun Li}

\address{
Xiaochun Li\\
Department of Mathematics\\
University of Illinois at Urbana-Champaign\\
Urbana, IL, 61801, USA}

\email{xcli@math.uiuc.edu}

\date{}

\begin{abstract}
 A Stein-Tomas type inequality and a (weak) decoupling inequality are proved by using the polynomial partitioning method.  Both estimates are related closely to Waring's problem. 
\end{abstract}
\maketitle

\section{Introduction}
\setcounter{equation}0

Let $N\in\mathbb N$ and $d$ be a real number $\geq 3$. We chose $N$ many points, say $P_1, \cdots, P_N$, which are equally distributed in the curve
$\Gamma:=\{(\xi, \xi^d)\in\mathbb R^2: 1\leq |\xi|\leq 2\}$.  Let us explain the precise meaning of the equally distributed points. 
Suppose that $P_j=(\xi_j, \xi_j^d)\in \Gamma$ for each $j=1, \cdots, N$.  We say $P_j$'s are equally distributed in $\Gamma$ if 
$\{\xi_1, \cdots, \xi_N\}$ forms an arithmetic progression with common difference $N^{-1}$.   For any point $P\in \Gamma$, let $\omega_{P}$ denote an $N^{-1}\times N^{-d}$-rectangle, centered at $P$,  with its long side parallel to the tangent line of the curve $\Gamma$ at the point $P$.  We set $\Omega_N$ to be 
$$
\Omega_N:=   \big\{ \omega_{P_1}, \cdots, \omega_{P_N} \big\}\,.
$$
We use $\Id_E$ to denote the characteristic function of a measurable set $E\subset \mathbb R^2$. 
For any $\omega\in \Omega_N$, $f_\omega$ represents Fourier restriction of $f$ in the $\omega$, that is, $\wh{f_\omega}
= \wh f \Id_{\omega}$.  \\


The main theorem we want to establish is 

\begin{theorem}\label{thm1}
Let $p\geq 2d+2$. For any $\e>0$, there is a constant $C_{p,\e}$ depending on $p$ and $\e$ such that 
\begin{equation}\label{eq1}
\big\| \sum_{\omega\in \Omega_N} f_\omega \big\|_{L^p( B^2(N^d))}\leq C_{p, \e} N^{-\frac{d}{2}+\e}\|f\|_2\,,
\end{equation}
for any $f\in L^2$ and any $N\in \mathbb N$.  Here $B^2(N^d)$ stands for a ball in $\mathbb R^2$, of radius $N^d$. 
\end{theorem}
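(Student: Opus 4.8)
The plan is to prove \eqref{eq1} by a polynomial-partitioning induction in the spirit of Guth's approach to the restriction conjecture, with $N$ (equivalently the radius $R=N^d$) as the induction parameter, drawing transversality from the non-vanishing curvature of $\Ga$ on $1\le|\xi|\le 2$.

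\textbf{Reductions and wave packets.} First I would observe that \eqref{eq1} is a \emph{discrete restriction} estimate: $\sum_\om f_\om$ depends only on $\wh f$ restricted to the disjoint union $\bigcup_\om\om$, so the right-hand side of \eqref{eq1} may be replaced by $C_{p,\e}N^{-d/2+\e}\big(\sum_\om\|f_\om\|_2^2\big)^{1/2}$, and one checks that $p=2d+2$ is exactly the exponent forced by a single cap: for $\wh{f_\om}=\Id_\om$ one has $|\om|=N^{-(d+1)}$, the dual plate $T_\om$ has measure $N\cdot N^d$, hence $\|f_\om\|_{L^p(B^2(N^d))}\sim N^{-(d+1)}(N^{d+1})^{1/p}$, which is $\le N^{-d-1/2+\e}$ precisely when $p\ge 2d+2$. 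It is worth noting that the arithmetic-progression condition on the $\xi_j$ is what turns $\sum_\om f_\om$ into a model for the exponential sum $\sum_{n\sim N}a_n\,e(x_1n+x_2n^d)$, which is the source of the link with Waring's problem. Next, on $B^2(N^d)$ I decompose each $f_\om$ into wave packets $f_{\om,T}$, with $T$ running over a tiling of $B^2(N^d)$ by translates of the $N\times N^d$ plate dual to $\om$ (long side along the normal to $\Ga$ at the centre of $\om$) and $f_{\om,T}$ concentrated on $T$ with Fourier support in $\om$. A routine pigeonholing, costing only $N^\e$, lets one assume all $\|f_{\om,T}\|_2$ are comparable and work with the \emph{number} of packets; since $B^2(N^d)$ admits $\lesssim N^{d-1}$ such plates per $\om$, there are $\lesssim N^d$ packets in all.

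\textbf{Polynomial partitioning and the cellular/algebraic dichotomy.} Granting \eqref{eq1} at all scales $<N$, I would apply polynomial partitioning in $\ZR^2$: choose a polynomial $Q$ of degree $\lesssim D$, with $D$ a parameter, whose zero set $Z(Q)$ divides $B^2(N^d)$ into $\lesssim D^2$ cells $\mathcal O_i$, each carrying a $\approx D^{-2}$ share of $\int_{B^2(N^d)}|\sum_\om f_\om|^p$, and distinguish whether the bulk of the $L^p$-mass lies in the cells or in the wall $W=N_\rho(Z(Q))$. In the \emph{cellular case} each packet plate meets $\lesssim D$ cells, so some cell sees only a $\lesssim D^{-1}$-fraction of the packets; restricting to that cell and rescaling back to the model problem lowers the effective number of caps, so the induction hypothesis closes this case with a loss polynomial in $D$. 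In the \emph{algebraic case} I would cover $W$ by $\rho$-balls and, inside each, separate the packets into those whose plate is tangent to $Z(Q)$ and those transverse to it; the transverse packets are $O(D)$ per ball and are handled by $L^2$-orthogonality plus a C\'ordoba-type square-function bound, while the tangential packets concentrate along the real-algebraic curve $Z(Q)$, where the problem becomes essentially one-dimensional and is closed by iterating the induction together with the curvature of $\Ga$.

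\textbf{Closing the induction and the main obstacle.} Combining the two cases, the cellular loss is polynomial in $D$ while the algebraic estimate needs $D$ large in terms of $\e$ for the curvature of $\Ga$ to be felt; the standard optimization of $D$ and of the number of iterations then yields the factor $C_{p,\e}N^{-d/2+\e}$, the case $N=O(1)$ being immediate. I expect the real difficulty to be the algebraic/wall term. In contrast to the compact-manifold restriction problem, the caps $\om_{P_j}$ are far thinner than the curvature scale of $\Ga$ and $B^2(N^d)$ is enormous, so one must ensure that curvature is seen at the correct scale once the mass collapses onto $Z(Q)$: the tangential packets there threaten to pile up, and the only genuinely lower-dimensional object in the plane is a transverse line, on which bare $L^2$-orthogonality does not recover the full $N^{-d/2}$. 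One must therefore either bootstrap that gain through the induction or feed in the sharp bound $\lesssim_\e N^{d+1+\e}$ for the number of solutions of the two-equation system $m_1+\dots+m_{d+1}=m_1'+\dots+m_{d+1}'$, $m_1^d+\dots+m_{d+1}^d={m_1'}^d+\dots+{m_{d+1}'}^d$ with $m_i,m_i'\sim N$ — an input of the same depth as Waring's problem. Organizing the scheme so that this arithmetic is produced internally rather than assumed is, I think, the crux.
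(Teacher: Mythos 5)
Your framework — wave packets, polynomial partitioning of $B^2(N^d)$ into $\lesssim D^2$ cells plus a wall $W$, closing the cellular case by the fact that each plate meets $\lesssim D$ cells together with induction on the scale — is exactly the paper's, and your sharpness computation confirming $p=2d+2$ is correct. The gap is that you never actually prove the wall estimate; you correctly identify it as ``the crux'' and then leave it open, offering two speculative routes (bootstrapping the gain through the induction on the tangential part, or importing a Vinogradov-type mean value bound for the system $\sum m_i=\sum m_i'$, $\sum m_i^d=\sum (m_i')^d$), neither of which is used and neither of which is likely to work as stated. In particular, any route through induction on scales or rescaling for the wall term runs into precisely the obstruction the paper singles out: for $d>2$ the curve $(\xi,\xi^d)$ is badly distorted under the affine rescalings that would be needed, and the $N^{-1}\times N^{-d}$ caps do not lie in an $N^{-d}$-neighborhood of $\Gamma$, so Guth's original treatment of the transverse term (and your proposed iteration on the tangential term) does not close. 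No arithmetic mean-value input is needed.

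What the paper actually does for the wall is the content you are missing. It classifies the rectangles meeting $W$ by the \emph{dyadic magnitude} of the angle between $e(T)$ and the tangent space of $Z(P)$, at a hierarchy of spatial scales (the cubes $\mathcal Q^*_\Xi$ and $\mathcal Q^{**}_\Delta$), so that only two extremal regimes carry the main contribution. The tangential part is controlled by counting \emph{directions}: Wongkew's volume bound for neighborhoods of $Z(P)$ shows only $O(DR^{2\delta})$ directions of tangential rectangles occur per cube, after which Bernstein's inequality finishes, using $p\ge 2(d+1)$. The transverse part is handled \emph{directly, with no induction}: on each $R$-cube the relevant Fourier support lies in an $R^{-1}$-neighborhood of $\Gamma$, so the classical Stein--Tomas $L^6$ bound applies; this is interpolated with an $L^\infty$ bound coming from the small measure of the Fourier support (again via the direction count), and the sum over cubes is controlled by Guth's lemma that a tube is transverse to $Z(P)$ in at most $O(D^2)$ cells — not $O(D)$ packets per ball, as you assert. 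Your proposal would need this entire mechanism supplied before it constitutes a proof.
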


Notice that when $d=2$, (\ref{eq1}) becomes the well-known Stein-Tomas inequality associated to an $N^{-2}$-neighborhood of a parabola. 
Hence, Theorem \ref{thm1} is a Stein-Tomas type restriction estimate for a function whose Fourier transform is supported in a broken line, instead of a smooth curve with a positive Gaussian curvature.  
 For $d>2$, those thin rectangles $\omega_{P_j}$'s no longer lie in the $N^{-d}$-neighborhood of the curve $\Gamma$.  This is the real enemy, which invalidates 
the $TT^*$-method used in the proof of the Stein-Tomas theorem.  In addition,   a simple example involving a single wavepacket shows that  (\ref{eq1}) is nearly sharp (up to $N^\e$) and the $p=2d+2$ is the smallest number to maintain the decay of $N^{-d/2}$ in the right side. \\

Theorem \ref{thm1} was motivated by the study of the following mean value problem.
\begin{conjecture}\label{conj1}
Let $d\geq 3$ be an integer. 
$\forall \e>0$ and $p\geq 2(d+1)$,
\begin{equation}\label{eqn2}
 \bigg\|\sum_{n=1}^N a_n e^{2\pi i x_1 n + 2\pi i x_2 n^d}  \bigg\|_{L^p((\mathbb T^2)} \lesssim 
 N^\e \bigg( \sum_{n=1}^N\big|a_n\big|^2\bigg)^{\f12}\,. 
 \end{equation}
 \end{conjecture}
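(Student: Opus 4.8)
The plan is to prove Theorem~\ref{thm1} by the polynomial partitioning method, running an induction on the radius that simultaneously decouples the geometry of the broken line. Write $F = \sum_{\omega\in\Omega_N} f_\omega$ and fix the target ball $B = B^2(N^d)$. The key structural feature is that each wavepacket adapted to $\omega_{P_j}$ is, on the ball $B^2(N^d)$, essentially constant along a tube of dimensions $N^{d-1}\times N^d$ in the direction normal to $\Gamma$ at $P_j$; for $d>2$ these tube directions spread out over a full angular range, so the collection behaves like a genuinely two-dimensional bush rather than a flat stack. First I would reduce, by Bernstein and a standard pigeonholing on the dyadic level sets of $|F|$, to proving the restricted strong-type bound $|\{x\in B: |F(x)|>\lambda\}|\,\lambda^p \lesssim_\e N^{\e}\,(N^{-d/2}\|f\|_2)^p$, and normalize $\|f\|_2 = 1$, so that each $\|f_\omega\|_2^2$ is comparable and there are $\sim N$ relevant caps.

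Next I would invoke polynomial partitioning (Guth's version, or the Guth--Katz style statement): choose a polynomial $Q$ of degree $\sim D$ whose zero set $Z(Q)$ partitions $B$ into $\sim D^2$ cells $\{O_i\}$, each of which carries an equal share of the $L^p$ mass of $F$, i.e. $\|F\|_{L^p(O_i)}^p \gtrsim D^{-2}\|F\|_{L^p(B)}^p$. One then splits the contribution into the cellular case, where the mass is dominated by $\sum_i \|F\|_{L^p(O_i)}^p$, and the wall case, where it is dominated by $\|F\|_{L^p(N_{R^{1/2}}(Z(Q))\cap B)}^p$ for an appropriate thickening of the variety. In the cellular case, each cell meets only $O(D)$ of the wavepacket tubes (by B\'ezout, since a tube is contained in $O(1)$-neighborhood of a line and each line crosses $O(D)$ cells, so by an averaging argument a typical cell sees at most $\sim N/D$ tubes after a further pigeonhole on which caps are heavy on which cell); rescaling each cell back to a ball of the original type and applying the induction hypothesis with the reduced number of caps gives the required gain, provided $D$ is chosen as a suitable power of $N^\e$.

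The main obstacle — and the place where the case $d>2$ genuinely departs from Stein--Tomas — is the wall term. On the $R^{1/2}$-neighborhood of $Z(Q)$ one loses the transversality that drives the cellular recursion, and one must instead exploit that $Z(Q)$ is an algebraic curve of controlled degree: the tubes that concentrate near $Z(Q)$ are either (i) tangent to it, in which case a polynomial Wongkew/Guth tangential estimate bounds how many tubes through a given ball can be simultaneously tangent to a degree-$D$ variety, or (ii) transverse, contributing like the cellular case at a smaller scale. Handling (i) requires the two-dimensional geometry of the rectangles $\omega_{P_j}$: because $d\ge 3$, distinct caps have quantitatively separated normal directions, so a single algebraic curve of degree $D$ can be tangent to only $O(D)$ of the $N$ tubes within any unit ball, which is exactly the saving needed. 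I expect the bookkeeping of the induction — tracking the number of caps, the radius, and the $\e$-loss through $O(\log N)$ iterations so that the constants do not blow up — to be the most delicate routine part, while the tangential wall estimate is the real conceptual heart. Finally, summing the cellular and wall contributions and optimizing $D \sim N^{\e^2}$ closes the induction and yields \eqref{eq1}; the sharpness claim is checked directly against a single wavepacket, for which $\|f_\omega\|_{L^p(B)} \sim N^{-(d+1)/p}\,|\omega|^{1/2}$ forces $p\ge 2d+2$.
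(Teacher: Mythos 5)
Your proposal does not address the statement at hand. The statement is Conjecture \ref{conj1}, the discrete mean value estimate for the exponential sums $\sum_{n\le N} a_n e^{2\pi i(x_1 n + x_2 n^d)}$ on $\mathbb T^2$. The paper does not prove this; it is stated explicitly as an open conjecture, and the introduction explains that a standard transference reduces it to the decoupling inequality of Conjecture \ref{conj2}, which is also open for $d>2$ (the paper only obtains the weak substitute Theorem \ref{thm2}, with an extra loss and an $L^{p/2}$ norm on the right). What you have written is instead an outline of the proof of Theorem \ref{thm1}, the $L^2\to L^p$ Stein--Tomas type estimate. Even granting that outline in full, it does not yield Conjecture \ref{conj1}: the conjecture requires $\ell^2$-decoupling-type control, i.e.\ a bound by $\big(\sum_\omega\|f_\omega\|_p^2\big)^{1/2}$ with arbitrary coefficients $a_n$, whereas Theorem \ref{thm1} bounds the extension by the global $\|f\|_2$, which after transference to the lattice points $(n,n^d)$ only controls the case of essentially evenly distributed mass and, more importantly, does not survive the periodization over the $\sim N^{2d}$ unit cells of $B^2(N^d)$ where the wave packets (tubes of length $N^d$) and the $1$-periodic exponentials interact very differently. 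The paper is explicit that the obstruction to upgrading its method to a genuine decoupling inequality is the bad behaviour of $(\xi,\xi^d)$, $d>2$, under translation, which breaks the induction on scales needed for Conjecture \ref{conj2}.

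Even read as a proof of Theorem \ref{thm1}, your sketch misses the paper's central new idea. The wall is the $R^{1+\delta}$-neighborhood of $Z(P)$ (not an $R^{1/2}$-neighborhood: the wave packets here are $N\times N^d$ rectangles, not $R^{1/2}$-tubes), and the transverse part of the wall \emph{cannot} be handled ``like the cellular case at a smaller scale,'' precisely because the rectangles do not lie in the $N^{-d}$-neighborhood of $\Gamma$ and rescaling a sub-ball does not reproduce the original setup. The paper's actual mechanism is a dyadic classification of rectangles by the angle they make with $Z(P)$ (the parameters $\Xi$ and $\Delta$ of Definitions \ref{def-X} and \ref{def-D}), with the tangential extreme handled by Wongkew's volume bound plus Bernstein, and the transverse extreme handled directly, without induction on scales, by interpolating a Stein--Tomas $L^6$ bound against an $L^\infty$ bound coming from the small Fourier support forced by Lemma \ref{lem-geo} and Guth's transversality lemma. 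Your proposal keeps the standard Guth scheme where the transverse wall term is recursed, which is exactly the step the paper identifies as failing for $d>2$.
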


Conjecture \ref{conj1} is certainly very interesting in additive number theory because it leads to a large improvement on Waring's problem.  It is also closely related to Fourier analysis, for instance,  
(\ref{eqn2}) can be viewed as a discrete Fourier restriction problem on the curve $(n, n^d)$.  
Moreover,  a standard deduction leads the conjecture to the following decoupling problem. 

\begin{conjecture}\label{conj2}
Let $P_j=(\xi_1, \xi_j^d)$ ($j=1, \cdots, N$) be equally distributed in $\Gamma$. For any $\e>0$ and 
$p\geq 2(d+1)$, 
\begin{equation} \label{eqn3}
\big\| \sum_{\omega\in\Omega_N} f_\omega\big\|_{p}\lesssim N^{\f12-\frac{d+1}{p}+\e} \bigg( \sum_{\omega\in\Omega_N}
  \big\| f_\omega\big\|_p^2 \bigg)^{\f12}\,.
\end{equation}
\end{conjecture}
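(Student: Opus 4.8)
The natural line of attack on \eqref{eqn3} is a polynomial-partitioning and induction-on-scales scheme in the spirit of Bourgain--Guth and Guth, with Theorem~\ref{thm1} supplying the single-scale $L^2$-input. First I would carry out the standard reductions: interpolating the endpoint $p=2(d+1)$ against the trivial $\ell^1\hookrightarrow\ell^2$ bound at $p=\infty$ reduces everything to $p=2(d+1)$, where the target constant is $N^{\e}$; a pigeonholing lets one assume $\|f_\omega\|_p$ is comparable across all contributing $\omega\in\Omega_N$; and a wave-packet decomposition on the dual ball $B^2(N^d)$ lets one assume each $f_\omega$ is a sum of wave packets adapted to the dual box of $\omega$, of dimensions $N\times N^d$. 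It is worth recording where Theorem~\ref{thm1} already settles the matter: if every $f_\omega$ were a \emph{single} wave packet concentrated on one dual box, then $\|f_\omega\|_2=(N^{d+1})^{\frac12-\frac1p}\|f_\omega\|_p$, and inserting this together with the (essentially orthogonal) bound $\sum_\omega\|f_\omega\|_2^2\lesssim\|f\|_2^2$ into \eqref{eq1} produces exactly the exponent $\frac12-\frac{d+1}{p}$ of \eqref{eqn3}. So the whole problem lies in the high-multiplicity regime, where the wave packets of one $f_\omega$ spread over many dual boxes.

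To treat that regime I would iterate over scales. Fixing a degree parameter, partition $B^2(N^d)$ by the zero set $Z(P)$ of a polynomial $P$ of that degree so that each component of $\ZR^2\setminus Z(P)$ carries a comparable part of $\|\sum_\omega f_\omega\|_p^p$, and split into a cellular and an algebraic case. In the cellular case the wave packets crossing a typical cell come from a reduced sub-collection of arcs; after the anisotropic rescaling $(\xi,\xi^d)\mapsto(\lambda\xi,\lambda^2\xi^d)$ --- which straightens a curvature-scale sub-arc of $\Gamma$ into a near-parabola --- I would apply the inductive hypothesis at a smaller value of $N$, so it is prudent to also carry Bourgain--Demeter $\ell^2$ decoupling for the parabola in the toolkit. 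In the algebraic case the $L^p$-mass concentrates in the $N^{-d}$-neighborhood of $Z(P)$; there I would try to use that, at the angular resolution $N^{1-d}$ of our rectangles, a bounded-degree curve is tangent to only $O_d(1)$ of the arc-directions through each of its points, feed an $L^2$/Theorem~\ref{thm1} bound on each tangential slab, and recombine the slabs in $\ell^2$. Throughout, one must propagate the right-hand side $\big(\sum_\omega\|f_\omega\|_p^2\big)^{1/2}$ faithfully, so that optimizing the degree leaves only an $N^{\e}$ factor.

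The hard part will be the algebraic/tangential case, and the obstruction is precisely the scale mismatch flagged in the introduction: the width $N^{-d}$ of the rectangles $\omega$ lies far below the curvature scale $N^{-2}$ of $\Gamma$, so inside one curvature-scale sub-arc about $N^{d-2}$ of the $\omega$'s sit stacked essentially flatly, and \emph{all} of them can be tangent to $Z(P)$ along a full $N^{-1}$-length piece with no curvature available to separate them --- which is exactly why $TT^*$ fails. As a result, what this scheme actually produces, with Theorem~\ref{thm1} plugged in, is a \emph{weak} form of \eqref{eqn3} in which the $N^{\e}$ is replaced by a genuine power of $N$ measuring the wave-packet multiplicity (equivalently, one may decouple $\Gamma$ into its curvature-scale arcs by Bourgain--Demeter and then flat-decouple each arc into its $\sim N^{d-2}$ thin rectangles). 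Removing this loss, that is, recovering the sharp exponent $\frac12-\frac{d+1}{p}$, would require genuinely exploiting the curvature of $\Gamma$ at the fine scale $N^{-d}$, which is the arithmetic content of Waring's problem sitting behind the conjecture; for that reason I expect the sharp form of \eqref{eqn3} to remain beyond this method.
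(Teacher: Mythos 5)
This statement is labeled a \emph{conjecture} in the paper, and the paper offers no proof of it: the author explicitly writes that for $d>2$ it is not clear whether \eqref{eqn3} holds, and only establishes a weak substitute (Theorem~\ref{thm2}, with an extra factor $N^{-\frac{d+1}{p}}$ and $\|f_\omega\|_{p/2}$ norms on the right). Your proposal, to its credit, reaches the same honest endpoint --- you concede that the sharp exponent $\frac12-\frac{d+1}{p}$ is beyond the method and that you only obtain a weak form with a genuine power loss --- so what you have written is not a proof of the statement, and no proof is currently known. Your single-wave-packet calculation showing that Theorem~\ref{thm1} already gives the exponent of \eqref{eqn3} in the low-multiplicity regime is correct and is exactly the heuristic behind Theorem~\ref{thm2}.

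Where your diagnosis diverges from the paper's is worth flagging, because you have the obstruction essentially inverted. You claim the cellular case closes by anisotropic rescaling of a curvature-scale sub-arc to a near-parabola followed by the inductive hypothesis, and you locate the fatal difficulty in the algebraic/tangential case. The paper says the opposite: if the wall (algebraic) contribution dominates, then \eqref{eqn3} \emph{does} follow from the machinery of Proposition~\ref{prop-alg} (the tangential rectangles reduce to an essentially one-dimensional problem via Lemma~\ref{lem-geo}, and the transverse ones are controlled directly without induction), whereas the genuinely open step is reducing to the algebraic contribution, i.e.\ the cellular/induction-on-scales step. Your rescaling argument does not close that step: the inductive hypothesis concerns the specific curve $(\xi,\xi^d)$ on $1\le|\xi|\le 2$ partitioned into $N^{-1}\times N^{-d}$ boxes, and an affinely renormalized sub-arc is a perturbed parabola whose boxes no longer have width at the $d$-th power of their length --- this is precisely the ``distortion under translation'' the introduction identifies as the biggest obstacle. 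So the missing idea is not curvature at scale $N^{-d}$ in the tangential case, but a replacement for parabolic rescaling in the cellular iteration; absent that, your scheme, like the paper's, terminates at a weak decoupling inequality rather than at \eqref{eqn3}.
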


The case when $d=2$ was solved by Bourgain and Demeter \cite{BD}.  For $d>2$, it is not clear that if
(\ref{eqn3}) is true, although it is not difficult to see the sharpness of $(\ref{eqn3})$, up to a factor of $N^\e$. 
Unlike $(\xi, \xi^2)$, $(\xi, \xi^d)$ with $d>2$ is distorted badly under translations.  This seems to be the biggest obstacle. It results in a significant enemy in the induction on scales, the most crucial tool used in 
all existing methods in the decoupling theory.  We are still working on seeking a way to prove the decoupling inequalities without employing the induction on scales too much.  By using the method in the proof of Theorem \ref{thm1}, we are able to obtain some weak version of decoupling inequalities as follows. 

\begin{theorem} \label{thm2}
Let $P_j=(\xi_1, \xi_j^d)$, $j=1, \cdots, N$, be equally distributed in $\Gamma$. 
For any $\e>0$ and $p\geq 2(d+1)$, 
\begin{equation}
\big\| \sum_{\omega\in\Omega_N} f_\omega\big\|_{p}\lesssim 
N^{\f12-\frac{d+1}{p}+\e}  N^{-\frac{d+1}{p}}\bigg(\sum_{\omega\in\Omega_N} \big\|f_\omega\big\|_{p/2}^2\bigg)^{\f12}\,.
\end{equation}
\end{theorem}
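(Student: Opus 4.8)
The plan is to deduce Theorem~\ref{thm2} directly from Theorem~\ref{thm1} by an interpolation-type argument against the trivial $L^2$-orthogonality estimate, exploiting that the rectangles $\omega\in\Omega_N$ are essentially disjoint in frequency. First I would record the trivial lower-bound companion to \eqref{eq1}: by Plancherel and the (bounded-overlap) disjointness of the $\omega$'s,
\begin{equation}
\big\|\sum_{\omega\in\Omega_N} f_\omega\big\|_2 \lesssim \Big(\sum_{\omega\in\Omega_N}\|f_\omega\|_2^2\Big)^{1/2} = \|f\|_2,
\end{equation}
and, more to the point, each single wavepacket piece is controlled on the ball $B^2(N^d)$. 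The idea is that Theorem~\ref{thm1} is an estimate with $\ell^2\to L^p$ gain, and I want to trade the global $\|f\|_2$ on its right-hand side for the localized $\ell^2(L^{p/2})$ norm appearing in Theorem~\ref{thm2}; the loss $N^{-(d+1)/p}$ is exactly the volume normalization $|B^2(N^d)|^{-1/p}=N^{-2d/p}$ reconciled with the number of caps $N$ and the scaling of a single $\omega$.

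**Key steps, in order.** (1) Fix $f$ and, by the usual reduction, assume $\wh f$ is supported in $\bigcup_{\omega\in\Omega_N}\omega$, so $f=\sum_\omega f_\omega$. (2) Apply Theorem~\ref{thm1} with exponent $p$ (legitimate since $p\geq 2(d+1)=2d+2$) to get $\|\sum_\omega f_\omega\|_{L^p(B^2(N^d))}\leq C_{p,\e}N^{-d/2+\e}\|f\|_2$. (3) Convert $\|f\|_2$ into a sum over caps via Plancherel, $\|f\|_2^2=\sum_\omega\|f_\omega\|_2^2$, and then on each cap compare the $L^2$ norm to a localized $L^{p/2}$ norm: since $f_\omega$ has Fourier support in a single $N^{-1}\times N^{-d}$ rectangle, it is essentially constant on dual tubes of dimensions $N\times N^d$, so on a ball of radius $N^d$ Bernstein/Hölder gives
\begin{equation}
\|f_\omega\|_{L^2(B^2(N^d))} \lesssim |B^2(N^d)|^{\frac12-\frac{2}{p}} \|f_\omega\|_{L^{p/2}(B^2(N^d))} = N^{2d(\frac12-\frac2p)}\|f_\omega\|_{L^{p/2}(B^2(N^d))},
\end{equation}
after first localizing $f$ to the ball (a tail argument, using rapid decay of the wavepackets, handles the region outside $B^2(N^d)$). (4) Sum in $\omega$: combining steps (2)--(3) and bookkeeping the powers of $N$ (the $N^{-d/2}$ from Theorem~\ref{thm1}, the $N^{2d(1/2-2/p)}$ from Bernstein, together with the count of $N$ caps producing an extra factor that must be absorbed), the exponent collapses to $N^{1/2-(d+1)/p}N^{-(d+1)/p+\e}$, which is precisely the claimed bound. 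I would carry the $\e$'s loosely throughout and fix them at the end.

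**Main obstacle.** The delicate point is step~(3): passing from the global $L^2$ norm to the \emph{localized} $\ell^2(L^{p/2})$ norm without losing more than $N^{-(d+1)/p}$. One must be careful that the Bernstein-type comparison $\|f_\omega\|_2\lesssim N^{2d(1/2-2/p)}\|f_\omega\|_{p/2}$ holds with the $L^{p/2}$ norm taken only over $B^2(N^d)$ and not over all of $\mathbb R^2$ — this requires that most of the $L^{p/2}$ mass of each wavepacket $f_\omega$ lives in (a dilate of) $B^2(N^d)$, which in turn needs a Schwartz-tail estimate and possibly a pigeonholing of $f$ into a bounded number of spatial scales so that the reverse Bernstein inequality is not wasteful. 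A secondary technical nuisance is that the exponent arithmetic only works cleanly when $p=2(d+1)$; for larger $p$ one interpolates this endpoint with the trivial $L^\infty$ bound, or simply re-runs the argument noting monotonicity of the relevant norms. I expect the endpoint case $p=2(d+1)$ to be where all the powers of $N$ must be matched exactly, and that is the step to write out with care.
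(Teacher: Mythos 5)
There is a genuine gap, and it is in the exponent arithmetic of steps (3)--(4), which you never actually carry out. Concatenating Theorem \ref{thm1} with the reverse H\"older (Bernstein) inequality $\|f_\omega\|_{L^2(B^2(N^d))}\leq |B^2(N^d)|^{\frac12-\frac2p}\|f_\omega\|_{L^{p/2}(B^2(N^d))}$ and Plancherel gives
\begin{equation*}
\big\|\sum_{\omega}f_\omega\big\|_{L^p(B^2(N^d))}\lesssim N^{-\frac d2+\e}\,N^{2d(\frac12-\frac2p)}\Big(\sum_\omega\|f_\omega\|_{p/2}^2\Big)^{\frac12}=N^{\frac d2-\frac{4d}{p}+\e}\Big(\sum_\omega\|f_\omega\|_{p/2}^2\Big)^{\frac12},
\end{equation*}
whereas Theorem \ref{thm2} asserts the exponent $\frac12-\frac{2(d+1)}{p}+\e$. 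The difference is
\begin{equation*}
\Big(\frac d2-\frac{4d}{p}\Big)-\Big(\frac12-\frac{2(d+1)}{p}\Big)=(d-1)\Big(\frac12-\frac2p\Big)\;\geq\;\frac{(d-1)^2}{2(d+1)}>0
\end{equation*}
for all $d\geq3$ and $p\geq 2(d+1)$; e.g.\ for $d=3$, $p=8$ your route yields $N^{0+\e}$ against the claimed $N^{-\frac12+\e}$. So the exponents do not ``collapse'' to the stated bound; they fall short by a fixed positive power of $N$. The loss is structural, not a bookkeeping slip: reverse H\"older over the full ball is sharp only when $f_\omega$ fills out $B^2(N^d)$, while the extremizer saturating Theorem \ref{thm1} is a single wavepacket concentrated on an $N\times N^d$ tube of volume $N^{d+1}\ll N^{2d}$. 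The two worst cases are incompatible, which is exactly why a decoupling-type inequality cannot be obtained by post-processing a Stein--Tomas-type inequality with H\"older. (The localization/Schwartz-tail issue you flag as the main obstacle is comparatively harmless.)

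The paper does not derive Theorem \ref{thm2} from Theorem \ref{thm1} as a corollary; it states that Theorem \ref{thm2} ``can be justified as Theorem \ref{thm1}'', meaning one reruns the whole polynomial partitioning scheme --- induction on scales for the cellular term, and the tangential/transverse analysis of Sections 4--5 for the wall term --- with the quantity $N^{-\frac{d+1}{p}}\big(\sum_\omega\|f_\omega\|_{p/2}^2\big)^{1/2}$ in place of $\|f\|_2$ throughout (note that $N^{-\frac{d+1}{p}}\|f_\omega\|_{p/2}$ is comparable to $\|f_\omega\|_p$ at the single-wavepacket level, which is what makes the induction close). If you want to complete a proof, that is the route to take; your step (2) can survive only as the template for the argument, not as a black box. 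Your fallback of ``interpolating with the trivial $L^\infty$ bound'' for larger $p$ also does not apply as stated, since inequalities with $\ell^2(L^{p/2})$ right-hand sides do not interpolate by the standard complex-interpolation theorems.
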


Here we only state Theorem \ref{thm2}, without presenting its proof in details because it can be justified 
as  Theorem \ref{thm1}.  Let us outline the ideas used in the proof of Theorem \ref{thm1}.  
We employ the polynomial partitioning method  in \cite{Guth} developed by Guth.  For a polynomial $P$ in the polynomial ring $\mathbb R[x]$, $Z(P)$ denotes the variety $\{ x\in\mathbb R^n: P(x)=0\}$.   
Using a variety $Z(P)$ generated by a polynomial $P$ of degree $D$, the $N^d$-ball can be divided in to $O(D^2)$ many cells $O_j$ such that 
$$\int_{O_j}\big| \sum_\omega f_\omega\big|^p \sim \mu\,.$$
Here $\mu$ is a dyadic number independent of $j$. In order to taking wavepackets into account,  each $O_j$ 
is replaced by a smaller cell $O_j'$, given by $O_j\backslash W$, where $W$ is an $N$-neighborhood of the variety $Z(P)$.  The cellular contribution from $O_j'$'s can be treated by the induction on scales. Henceforth, 
it remains to handle the wall contribution 
$$
\int_W \big| \sum_\omega f_\omega\big|^p\,, 
$$
which can be further broken into transverse and tangential parts.  The tangential part is related to those rectangles contained in a small neighborhood of the variety $Z(P)$, and thus we run into a simple $1$-dimensional problem.  The transverse part usually can be removed by the induction on scales. However, 
because the thin $N\times N^d$-rectangles do not stay in an $N^{-d}$-neighborhood of the curve $\Gamma$,
the original treatment in \cite{Guth} for the transverse contribution can not be applied. This is the main enemy we encounter.   To overcome this difficulty, we have to consider more delicate generalized versions of the transverse and tangential rectangles, classified by the (dyadic) magnitude of angle formed by the direction of the rectangle and the variety.   When $p\geq 8$, 
the principal contribution comes from only two extremal cases, the most tangential rectangles which 
lie completely in an $N$-neighborhood of the variety $Z(P)$ and the most transverse rectangles which 
cross the variety $Z(P)$ by an angle $\sim 1$.  Because of this novel phenomenon,  the transverse contribution can be treated directly without using the induction on scales. Therefore the aforementioned 
obstacle can be overcome and Theorem \ref{thm1} can be achieved. \\

From the proof of Theorem \ref{thm1},  we also see that if the wall contribution dominates, the decoupling
inequality (\ref{eqn3}) in Conjecture \ref{conj2} holds.  However, it is challenging to  deduce the problem to the algebraic contribution.  \\

{\bf Acknowledgement} The author is supported by Simons collaboration grants.  

\section{Proof of Theorem \ref{thm1}} 

\subsection{Results of the polynomial partitioning}
The Borsuk-Ulam theorem asserts that if $F:\mathbb S^n\rightarrow \mathbb R^n$ is a continuous function, 
where $\mathbb S^n$ is the $n$-dimensional unit sphere, then there exists a point $v\in \mathbb S^n$ with $F(v)=F(-v)$. 
This theorem yields a beautiful application in the proof of Stone-Tukey's ham sandwich theorem (see \cite{Guth}), in which 
one can use a polynomial $P$ of degree $ O(m^{1/n})$ to bisect a family of $L^1$-functions $f_1, \cdots, f_m$.  
We state some polynomial partitioning results in the continuous setting here, whose proof can be found in \cite{Guth}.\\

\begin{theorem}\label{part1}
Suppose that $F$ is a non-negative integrable function on $\mathbb R^n$. Then for each $D\in\mathbb N$ there is a non-zero
polynomial $P$ of degree at most $D$ so that $\mathbb R^n\backslash Z(P)$ is a unon of $\sim D^n$ disjoint open sets $O_j$'s and the integral $\int_{O_j} F$ are all equal. 
\end{theorem}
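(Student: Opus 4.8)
The plan is to build $P$ as a product $P=\prod_{j=0}^{k-1}Q_j$ coming from an iterated bisection scheme, where at each stage one invokes the polynomial ham sandwich theorem, which itself follows from the Borsuk--Ulam theorem via the Veronese embedding. Thus the first reduction is to the following statement: if $\mu_1,\dots,\mu_m$ are finite measures on $\mathbb R^n$, each absolutely continuous with respect to Lebesgue measure, and $D\in\mathbb N$ satisfies $\binom{n+D}{n}-1\ge m$, then there is a non-zero polynomial $Q$ of degree at most $D$ with $\mu_i(\{Q>0\})=\mu_i(\{Q<0\})$ for every $i$. (We may assume $\int F>0$, the case $\int F=0$ being trivial.)

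To prove this bisection lemma, set $M=\binom{n+D}{n}-1\sim D^n$ and let $v\colon\mathbb R^n\to\mathbb R^{M+1}$ have as its coordinates all monomials of degree $\le D$. A point $a\in\mathbb S^M$ then determines a polynomial $Q_a:=\langle a,v(\cdot)\rangle$ of degree $\le D$ with $Q_{-a}=-Q_a$, and $Q_a\ne 0$ since $a\ne 0$. Define $\Phi\colon\mathbb S^M\to\mathbb R^M$ by letting its first $m\le M$ coordinates be $\mu_i(\{Q_a>0\})-\mu_i(\{Q_a<0\})$ and the rest be $0$. Each coordinate of $\Phi$ is odd, and $\Phi$ is continuous: the only possible source of a jump is mass sitting on $Z(Q_a)$, but $Z(Q_a)$ is Lebesgue-null and each $\mu_i$ is absolutely continuous, so $\mu_i(Z(Q_a))=0$ and the sign--integrals depend continuously on $a$. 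By Borsuk--Ulam there is $a$ with $\Phi(a)=\Phi(-a)=-\Phi(a)$, hence $\Phi(a)=0$, and $Q:=Q_a$ bisects all the $\mu_i$.

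Now iterate. At stage $j=0$, $\mathbb R^n$ is a single cell with $F$-mass $\int F$. Assuming after stage $j$ we have disjoint open sets $O_1,\dots,O_{2^j}$ with $\int_{O_\ell}F=2^{-j}\int F$, apply the bisection lemma to the $2^j$ measures $F\,\Id_{O_\ell}\,dx$ with $D_j$ minimal subject to $\binom{n+D_j}{n}-1\ge 2^j$, so that $D_j\lesssim 2^{j/n}$; replacing each $O_\ell$ by $O_\ell\cap\{Q_j>0\}$ and $O_\ell\cap\{Q_j<0\}$ gives $2^{j+1}$ disjoint open sets of $F$-mass $2^{-(j+1)}\int F$ each. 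Stop after $k$ stages with $2^k\asymp D^n$ and set $P=\prod_{j=0}^{k-1}Q_j$. Then $\deg P=\sum_{j=0}^{k-1}D_j\lesssim\sum_{j=0}^{k-1}2^{j/n}\lesssim 2^{k/n}\lesssim D$, so after tuning the constant in the choice of $k$ we may force $\deg P\le D$. The set $\mathbb R^n\setminus Z(P)$ decomposes according to the sign patterns of $(Q_0,\dots,Q_{k-1})$ into exactly the $2^k\sim D^n$ open sets $O_j$ produced above, and each has $\int_{O_j}F=2^{-k}\int F$.

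The step that needs the most care is the continuity of $\Phi$ that feeds into Borsuk--Ulam: for arbitrary (e.g.\ singular) measures the sign--integrals can jump as $Z(Q_a)$ sweeps across an atom or a lower-dimensional concentration, forcing a perturbation or weak-limit argument; here it is automatic because $F\,dx$ is absolutely continuous and algebraic hypersurfaces are Lebesgue-null. The remaining ingredients --- the estimate $\binom{n+D}{n}\sim D^n$, the geometric-type bound $\sum_{j\le k}2^{j/n}\lesssim 2^{k/n}$, and the bookkeeping that the sign cells of $P$ coincide with the cells built in the iteration --- are routine; full details are in \cite{Guth}.
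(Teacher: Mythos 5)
Your argument is correct and is exactly the iterated polynomial ham sandwich argument (Borsuk--Ulam via the Veronese embedding, then a bisection cascade with degrees summing geometrically to $O(D)$) that the paper points to: the paper does not prove Theorem \ref{part1} itself but cites Guth's proof, which proceeds just as you describe. The continuity issue for $\Phi$ that you single out is indeed the only delicate point, and your absolute-continuity/dominated-convergence resolution is the standard one.
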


A polynomial $P$ is called {\it non-singular} if $\nabla P(x)\neq 0$ for each point in $Z(P)$.  The variety $Z(P)$ is a smooth
hypersurface if $P$ is non-singular.  Technically it is more convenient to work on non-singular polynomials. It is known that 
non-singular polynomials are dense. Hence,  An analogy of Theorem \ref{part1} associated to the non-singular polynomials 
can be achieved as follows. 

\begin{theorem}\label{part2}
Suppose that $F$ is a non-negative integrable function on $\mathbb R^n$. Then for each $D\in\mathbb N$ there is a non-zero
polynomial $P$ of degree at most $D$ such that 
\begin{itemize}
\item the polynomial $P$ is a product of non-singular polynomials. 
\item  $\mathbb R^n\backslash Z(P)$ is a union of $\sim D^n$ disjoint open sets $O_j$'s and the integral $\int_{O_j} F$ agree 
up to a factor of $2$. 
\end{itemize}
\end{theorem}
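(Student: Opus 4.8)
The plan is to prove Theorem~\ref{part2} by running the proof of Theorem~\ref{part1} with an extra perturbation at every step. Recall that the equipartitioning polynomial of Theorem~\ref{part1} is built by iterated bisection: the polynomial ham sandwich theorem (Borsuk--Ulam applied after the degree-$d$ Veronese embedding of $\mathbb{R}^n$) gives, for any $m$ non-negative integrable functions, a non-zero polynomial of degree $\lesssim m^{1/n}$ bisecting each of the $m$ functions (i.e. its two complementary sides carry equal integrals of each); iterating this $\sim n\log D$ times — at stage $j$ bisecting the $\sim 2^{j}$ current cells with a single polynomial $Q_j$ of degree $\lesssim 2^{j/n}$ — produces a polynomial $\prod_j Q_j$ of degree $\lesssim\sum_j 2^{j/n}\lesssim D$ splitting $\mathbb{R}^n$ into $\sim D^n$ cells of equal $F$-integral. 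To make the result a product of non-singular polynomials, I would replace each $Q_j$ by a non-singular perturbation: after passing to its radical (which raises neither the degree nor changes the zero set), $Q_j$ has only finitely many critical values, so Sard's theorem supplies an arbitrarily small regular value $\epsilon_j$, and $Q_j-\epsilon_j$ is then non-singular of the same degree.

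The only feature lost is exactness of the bisection, and this costs little because $F\,dx$ is an absolutely continuous finite measure: the zero set of $Q_j-\epsilon_j$ is $F$-null, and the $F$-mass of the collar $\{0<|Q_j|<|\epsilon_j|\}$ inside any cell tends to $0$ as $\epsilon_j\to0$, so $Q_j-\epsilon_j$ bisects each current cell up to an $F$-mass error we may prescribe. Taking the $\epsilon_j$ successively small enough that the accumulated multiplicative distortion $\prod_j(1+o(1))$ of the cell masses stays below $2$, the polynomial $P:=\prod_j(Q_j-\epsilon_j)$ is, by construction, a product of non-singular polynomials of degree $\lesssim D$, with every cell of $\mathbb{R}^n\setminus Z(P)$ having $F$-integral within a factor of $2$ of the common value $\sim D^{-n}\int F$. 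The number of cells is pinned from below because each stage splits every active cell into at least two (both sides of $Q_j-\epsilon_j$ carry positive $F$-mass in it), and from above by the Milnor--Thom bound on the number of connected components of the complement of a real algebraic hypersurface of degree $\leq D$; so the count is $\sim D^n$.

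The hard part is exactly the bookkeeping in the middle step: one must verify that the small bisection errors committed at the $\sim n\log D$ stages do not compound into a failure of the two-sided mass bound, and one must cope with the fact that a non-singular hypersurface can split a cell into more than two pieces, some of them much smaller than the others — which is precisely why ``all equal'' must be weakened to ``agree up to a factor of $2$'', and why the perturbations must be chosen adaptively to the $F$-distribution at each stage (e.g. by not re-subdividing cells that have already become small). These verifications are carried out in \cite{Guth}; alternatively one may quote Theorem~\ref{part1} as a black box, perturb its partitioning polynomial once to a non-singular one, and absorb the resulting merging and splitting of cells — again controlled by absolute continuity of $F\,dx$ — into the same factor-of-$2$ slack after a final dyadic pigeonholing of the cells by the size of their $F$-integral.
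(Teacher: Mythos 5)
Your argument is essentially the approach the paper relies on: the paper gives no proof of Theorem~\ref{part2} itself but defers to \cite{Guth}, where the result is obtained exactly as you describe, by iterating the polynomial ham sandwich bisection, perturbing each bisecting polynomial to a non-singular one, and controlling the accumulated multiplicative error over the $\sim n\log D$ stages so that the cell integrals (of the sign-condition cells) stay within a factor of $2$. The proposal is correct; the only cosmetic slip is attributing the factor-of-$2$ slack to cells splitting into unequal connected components rather than to the perturbation errors --- the $O_j$ here are the $2^{s}\sim D^n$ sign-condition cells, for which the count is automatic and only the bisection error matters.
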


\subsection{ Wave packet decomposition} 
For any $R^{-d}\times R^{-1}$-rectangle $\omega\in\Omega_R$ in frequency,  $T$ is called a dual rectangle in physical space
if $T$ is an $R^d\times R$-rectangle whose direction is perpendicular to the long side of $\omega$.  We tile up the physical 
space by using a collection of disjoint dual rectangles of $\omega$, which is denoted by $\bT_\omega$.  For each dual rectangle $T\in \bT_\omega$, we associate it with a bump function $\phi_T: \mathbb R^2\rightarrow \mathbb C$, obeying 
$\phi_T$ is a non-negative Schwartz function whose Fourier transform is supported in the translate of $\omega$ at the origin, $\phi_T$ is essentially supported on $T$ up to a Schwartz tail, 
and 
\begin{equation}
\sum_{T\in \bT_\omega}\phi_T(x)=1\,,\,\,  \forall x\in \mathbb R^2.
\end{equation}
Because $\phi_T$'s form a partition of unity, we can represent $f_\omega$ as 
\begin{equation}\label{wave}
f_\omega = \sum_{T\in\bT_\omega} f_\omega \phi_T\,. 
\end{equation}
The function $f_\omega\phi_T$ is called a wave-packet. It is well-localized in both physical and frequency spaces, since
its Fourier transform is supported in $2\omega$ and in physical space it is essentially localized in $T$ up to a Schwartz tail. 
Second, when localized in $T$, the size of the wavepacket $f_\omega\phi_T$ can be viewed as a constant.  
Let $\delta=\e^{10}$, a number much smaller than $\e$.  We use $T^*$ to 
denote $R^{\delta}T$.  $T^*$ is designed to handle the Schwartz tail contribution from $\phi_T$. It has no significant difference from $T$.  By (\ref{wave}), we can make the wave packet decomposition 
\begin{equation}
\sum_{\omega\in \Omega_R} f_\omega=\sum_\omega \sum_{T\in\bT_\omega} f_\omega \phi_T\,. 
\end{equation}
The pair $(\omega, T)$ is called a tile. 
For any rectangle $T\subset \mathbb R^2$, $e(T)\in\mathbb S^1$ denotes the direction of the long side of $T$. 
When $T$ is fixed, its direction $e(T)$ is determined and therefore so does the frequency location $\omega$, which can be denoted by $\omega(T)$.  By this observation we can
simplify the notation by setting, for any $T\in \cup_{\omega}\bT_\omega$, 
$$
 f_T:= f_{\omega(T)}\phi_T\,. 
$$
The function $f_T$ is essentially supported in $T\times \omega(T)$ in the physical and frequency spaces. 
We now end up with our final form of the wave packet decomposition,
\begin{equation}
 \sum_{\omega\in \Omega_R} f_\omega = \sum_{T\in\bT} f_T\,, 
\end{equation}
where $\bT= \cup_{\omega\in \Omega_R} \bT_\omega$. 

\subsection{Application of polynomial partitioning} 

Without loss of generality, we can assume $N$ in Theorem \ref{thm1} to be a dyadic number, denoted by $R$. 
We now apply Theorem \ref{part2} to the integrable  function $\big| \sum_{\omega} f_\omega\big|^p$. For any (sufficiently large)
positive integer $D$, there is a non-zero polynomial $P$ of at most degree $D$ such that
\begin{itemize}
\item the polynomial $P$ is a product of non-singular polynomials,
\item  $\mathbb R^2\backslash Z(P)$ is a union of $c D^2$ disjoint open sets $O_j$'s.
\item  $\int_{O_j} \big|\sum_\omega f_\omega \Id_{B^2(R^d)} \big|^p \sim \int_{O_{j'}}  \big|\sum_\omega f_\omega  \Id_{B^2(R^d)} \big|^p$
 for any possible $j$ and $j'$. 
\end{itemize}
To make the wave packets involved, we will make each cell $O_j$ smaller.   Let $W$ be the $R^{1+\delta}$-neighborhood of the variety $Z(P)$. Let $O'_j$ denote $O_j\backslash W$. 
Then we can write 
$$
 \int_{B^2(R^d)} \bigg| \sum_{\omega\in\Omega_R}f_\omega\bigg|^p = \sum_{j}\int_{O'_j\cap B^2(R^d)}  \bigg| \sum_{\omega\in\Omega_R}f_\omega\bigg|^p  +  \int_{W\cap B^2(R^d)}  \bigg| \sum_{\omega\in\Omega_R}f_\omega\bigg|^p  \,.$$
The first term here is the cell contribution and the second term is the wall contribution.  
We can assume that $R$ is much larger than $D$ otherwise the desired conclusion would be trivial. For the wall contribution, we have the following estimates. 
\begin{proposition}\label{prop-wall}
For $p\geq 2d+2$, 
\begin{equation}\label{w11-est}
\bigg( \int_{W\cap B^2(R^d)}  \bigg| \sum_{\omega\in\Omega_R}f_\omega\bigg|^p \bigg)^{\frac{1}{p}}
\lesssim R^{-\frac{d}{2}+ 3\delta} \|f\|_2\,. 
\end{equation}
\end{proposition}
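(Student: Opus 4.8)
The plan is to run a wave packet analysis of $\sum_\omega f_\omega$ on $W$, to classify the wave packets by the angle they make with the variety $Z(P)$, and to reduce matters to two extreme configurations, the fully tangential and the fully transverse ones. First I would invoke the wave packet decomposition $\sum_{\omega\in\Omega_R}f_\omega=\sum_{T\in\bT}f_T$ underlying (\ref{wave}) and discard, by a routine Schwartz-tail estimate, every tube $T$ with $\mathrm{dist}(T,W)\ge R^{1+2\delta}$; the discarded part contributes $O(R^{-100}\|f\|_2)$ to $\|\cdot\|_{L^p(W\cap B^2(R^d))}$, which is admissible. For each surviving $T$, write $P=\prod_iP_i$ with the $P_i$ non-singular and $\deg P\le D$, and restrict $P$ to the core line $\ell_T$ of $T$: this is a one-variable polynomial of degree $\le D$, so $T$ meets the $R^{1+\delta}$-slab about $Z(P)$ in at most $O(D)$ connected \emph{planks} $\Pi$, on each of which $Z(P)$ is a single smooth branch. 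To each pair $(T,\Pi)$ attach the dyadic scale $\alpha=\alpha(T,\Pi)$ comparable to the angle between $e(T)$ and the tangent line of $Z(P)$ along $\Pi$ (after a harmless refinement of the planks, costing an extra $O(\log R)$ factor, this angle is essentially constant on $\Pi$, by a Bézout count of the arcs of $Z(P)$ with monotone tangent direction); $\alpha$ ranges over $O(\log R)$ dyadic values between a smallest scale $\alpha_{\min}$ (comparable to $R^{1+\delta-d}$) and $1$. Since $\big(\sum_{T\in\bT}f_T\big)\Id_W=\sum_{(T,\Pi)}f_T\Id_\Pi$, by the triangle inequality it suffices to prove, for each fixed $\alpha$,
$$
\Big\|\sum_{(T,\Pi):\,\alpha(T,\Pi)=\alpha}f_T\Id_\Pi\Big\|_{L^p(B^2(R^d))}\lesssim R^{-\frac d2+2\delta}\|f\|_2 ,
$$
with implied constants allowed to depend on $p,\e$ (hence on $D$).

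For the fully tangential scale $\alpha=\alpha_{\min}$ — where each plank is essentially a whole tube contained in a width-$\sim R$ neighborhood of $Z(P)$ — I would partition $Z(P)\cap B^2(R^d)$ into arcs on which the tangent direction is constant up to $R^{-1}$, observe that a fixed tube direction is tangential to $Z(P)$ along only $O(D)$ of these arcs (so the family $\{f_T\Id_\Pi\}$ has $O(1)$-bounded overlap in physical space while its frequency supports remain $R^{-1}$-separated), and then interpolate between the bound $\|\cdot\|_{L^2(W)}\lesssim\|f\|_2$ (almost-orthogonality, since each $f_\omega$ is merely multiplied by a nonnegative cutoff $\le1$) and the pointwise bound $\|\cdot\|_{L^\infty(W)}\lesssim R^{-(d+1)/2}\|f\|_2$ (Bernstein on the $R^{-1}\times R^{-d}$ rectangle $\omega$, together with the $O(1)$ multiplicity). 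This yields $\lesssim R^{-(d+1)(\frac12-\frac1p)+\delta}\|f\|_2$, which is $\lesssim R^{-\frac d2+2\delta}\|f\|_2$ exactly because $p\ge2d+2$; a single tangential wave packet shows this is sharp. For the fully transverse scale $\alpha\sim1$, each plank lies in a cube $Q$ of side $\sim R^{1+\delta}$ on which $Z(P)$ is, up to error $R^{1+\delta}$, a line segment crossed by the relevant tubes at angle $\sim1$. Because the frequency rectangles $\omega$ are tangent to the curvature-$\sim1$ curve $\Gamma$, on $Q$ they organize into the usual Córdoba caps, so the classical Córdoba $L^4$ square-function estimate applies on $Q$; summing over the $\lesssim R^{d-1}$ cubes needed to cover $W$ — each tube feeding planks into only $O(D)$ of them, by the plank count above — gives $\lesssim(\log R)\,R^{-d/2+\delta}\|f\|_2$ at $p=4$, and interpolating with the (unconditional) bound $\|\sum_\omega f_\omega\Id_W\|_{L^\infty(W)}\lesssim R^{-d/2}\|f\|_2$ settles all $p\ge4$ with room to spare, as expected since a transverse wave packet meets $W$ in only $\sim R^{2+\delta}$ of its $R^{d+1}$ volume.

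It remains to control an intermediate scale $\alpha_{\min}\ll\alpha\ll1$. Here I would run a hybrid of the two mechanisms above — a one-dimensional orthogonality estimate along the $\alpha$-neighborhood of $Z(P)$ combined with the planar $L^4$ orthogonality across it — producing a bound $B(\alpha)\|f\|_2$ whose dependence on $\alpha$ is monotone on each side of an interior value, so that when $p\ge8$ (which holds since $p\ge2d+2$ and $d\ge3$) the largest value of $B(\alpha)$ is attained at one of the two endpoints already treated; this is the precise mechanism behind the ``only two extremal cases matter'' phenomenon announced in the introduction. Summing the $O(\log R)$ scales and restoring the discarded tails then gives (\ref{w11-est}), the $\log R$ and $R^{O(\delta)}$ losses being absorbed into $R^{3\delta}$ since $\delta=\e^{10}$. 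The main obstacle is precisely this intermediate-scale transverse estimate: because the thin rectangles $\omega_{P_j}$ do not lie in an $R^{-d}$-neighborhood of $\Gamma$, the rescaling and induction-on-scales device of \cite{Guth} that usually disposes of transverse wave packets is unavailable, so one must instead establish the estimate by hand and verify quantitatively that its $\alpha$-dependence never beats the two extremes — carrying the bookkeeping of plank counts, direction separations, and powers of $R^\delta$ through this verification is the technical core of the proof.
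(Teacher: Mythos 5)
Your overall architecture --- wave packet decomposition, classification of tubes by the angle they make with $Z(P)$, a tangential endpoint handled by $L^2$--$L^\infty$/Bernstein interpolation (which is exactly where the threshold $p\ge 2(d+1)$ enters, as you correctly compute), and a transverse endpoint handled by planar orthogonality plus the volume gain from transversality --- is the same as the paper's. But there is a genuine gap at the intermediate angular scales, and it is precisely the step you defer: you assert that the bound $B(\alpha)$ is ``monotone on each side of an interior value'' so that only the two endpoints matter, without ever writing down $B(\alpha)$ or the mechanism that produces it. The paper does \emph{not} prove the proposition by interpolating between two extremes; it estimates \emph{every} dyadic scale directly, and the reason each scale closes is a structural feature missing from your plan: the spatial cube on which the estimate is performed shrinks as the angle grows. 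Concretely, tangential angles $\le \Xi R^{-d+1+\delta}$ are analyzed on cubes of side $\sim R^d/\Xi$ (Definition \ref{def-X}), where Wongkew's volume bound for $\mathcal N_{R^{1+\delta}}Z(P)$ (Theorem \ref{won} via Lemma \ref{lem-geo}) caps the number of participating directions at $O(DR^{2\delta})$ \emph{uniformly in} $\Xi$, after which Bernstein applies; transverse angles $\sim\Delta R^{-1}$ are analyzed on cubes of side $\sim R^2/\Delta$ (Definition \ref{def-D}), where Guth's transversality lemma (each tube meets $O(D^2)$ such cubes transversally) combines with the Stein--Tomas $L^6$ estimate on $R$-cubes and an $L^\infty$ bound coming from the measure of the Fourier support; the resulting bound carries a factor $(\Delta/R)^{1/2-4/p}$, which is $\le 1$ for all $\Delta\le R$ exactly when $p\ge 8$. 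In your scheme all planks live in slabs of thickness $R^{1+\delta}$ regardless of angle, so neither the uniform direction count nor the transverse cube count is available at intermediate $\alpha$, and the ``hybrid'' estimate you invoke is not something you can obtain by soft interpolation between your two endpoint arguments --- it is the content of the proof.

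Two secondary remarks. First, your transverse endpoint uses C\'ordoba's $L^4$ biorthogonality where the paper uses Stein--Tomas at $L^6$ plus an $L^\infty$ bound; your route is plausible (a single transverse wave packet is consistent with your claimed $L^4$ bound), but since the global inequality (\ref{eq1}) fails at $p=4$ you are leaning entirely on the quantitative transversality gain, which again requires the per-tube count of transverse cubes (the paper's Lemma \ref{lem-tran-1}) that your ``$O(D)$ planks'' heuristic does not supply at the right strength. Second, your claim of $O(1)$-bounded physical overlap for the tangential family should be $O(DR^{O(\delta)})$; this is harmless but is exactly the place where the algebraic degree $D$ and Wongkew's theorem must enter.
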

This is the most technical result in the paper, and it follows immediately from Proposition \ref{prop-alg}, stated and proved in 
Section \ref{alg-sec}. With (\ref{w11-est}) in hand, we are able to finish the proof of Theorem \ref{thm1} in a standard way. 
If the wall term dominates, we are done by Proposition \ref{prop-wall}.  Thus we can assume that 
\begin{equation}
 \int_{B^2(R^d)} \bigg| \sum_{\omega\in\Omega_R}f_\omega\bigg|^p \leq 10\sum_{j\in J}\int_{O'_j\cap B^2(R^d)}  \bigg| \sum_{\omega\in\Omega_R}f_\omega\bigg|^p\,,
 \end{equation}
 and for $j\in J$, $ \int_{O'_j\cap B^2(R^d)}  | \sum_{\omega\in\Omega_R}f_\omega|^p$
 agree up to  a factor of $10$. 
Taking the wave packets into account,  the worst scenario occurs if 
\begin{equation}\label{wst}
\int_{B^2(R^d)} \bigg| \sum_{\omega\in\Omega_R}f_\omega\bigg|^p \leq 10\sum_{j\in J'}\int_{O'_j\cap B^2(R^d)}  \bigg| \sum_{T\in\bT[O'_j]}f_T\bigg|^p\,,
\end{equation}
where $\bT[O'_j]$ stands for $\{T\in\bT:  T^*\cap O'_j\neq \emptyset\}$.  This is because that those $T$'s with 
$T^*\cap O'_j=\emptyset$ contribute a rapid decay of $R$.  By pigeonholing, we can assume that 
for $j\in J'\subset J$,  those $\int_{O'_j\cap B^2(R^d)}  | \sum_{T\in\bT[O'_j]}f_T|^p $ agree up to a factor $100$. 
 In addition, we can assume that $J$ has a cardinality at least 
$\frac{c}{2^{10}}D^2 $, otherwise,   up to a rapid decay contribution, 
 the right side of (\ref{wst}) is at most a small portion of the left one, which leads to the desired estimates in Theorem \ref{thm1}. Moreover, using $L^2$-orthogonality, we see that 
 \begin{equation}\label{ort}
\sum_{j\in J'} \big\|\sum_{T\in \bT[O'_j]}f_T\big\|_2^2 \leq \sum_{j\in J'} \sum_{T\in \bT[O'_j]}\big\|f_T\big\|_2^2 
\leq \sum_{T\in \bT} \sum_{\substack{j\in J'\\ T\cap O'_j\neq \emptyset}} \|f_T\|_2^2\lesssim 
D\|f\|_2\,. 
\end{equation}
In the last step of (\ref{ort}), we used an important geometric fact asserting that any rectangle $T$ enters at most $O(D)$ many 
cells. This serves a base of the polynomial partitioning method and it is a consequence of the fundamental theorem of Algebra. 
 Because there are $\sim D^2$ many $j\in J'$, from the pigeonhole principle, there exists a $j_0\in J'$ such that 
 \begin{equation}\label{j-0}
 \big\|\sum_{T\in \bT[O'_{j_0}]}f_T\big\|_2\lesssim D^{-1/2}\|f\|_2\,. 
  \end{equation}

Define $\beta\in \mathbb R$ to be the best constant such that 
 \begin{equation}
 \big\| \sum_{\omega\in\Omega_R}f_\omega\big\|_{B^2(R^d)} \leq R^\beta \|f\|_2\,,
 \end{equation}
 holds for any (dyadic) $R\geq 2$.   \\

 Employing the uniform distribution property of those integrals in the right side of (\ref{wst}), (\ref{j-0}) and the definition of $\beta$, we see that 
 \begin{equation}\label{f}
 \big\|\sum_{\omega\in\Omega_R}f_\omega \big\|_{L^p(B^2(R^d))}\lesssim D^\frac{2}{p}
  \big\|\sum_{T\in\bT[O'_{j_0}]}f_T \big\|_{L^p(B^2(R^d))}\lesssim D^{\frac{2}{p}-\f12} R^\beta \|f\|_2\,.  
 \end{equation}
The number $D$ can be chosen to be sufficiently large so that the right side of (\ref{f}) is much less than $R^{\beta-}\|f\|_2$.
Then we see that the cellular term can be handled by the induction on scales. Therefore, the wall term dominates and now Theorem 
\ref{thm1} follows from Proposition \ref{prop-wall}.

\section{Geometric lemmas}\label{ge}

Let $L$ and $W$ be positive numbers such that $L>W$.  Let $\mathcal T$ be a collection of $L\times W$-rectangles $T$, obeying $e(T)$'s are evenly distributed in $\mathbb S^1$ and the cardinality of $\{e(T): T\in\mathcal T\}$ is $J$. 
Let $\mathcal T'\subset \mathcal T$ such that 
\begin{itemize}
\item   distinct rectangles in $ \mathcal T'$ point to distinct directions, 
\item $\{ e(T): T\in \mathcal T'\}=\{e(T): T\in \mathcal T\}$. 
\end{itemize} 
 We use $|E|$ to denote Lebesgue measure of $E\subset \mathbb R^2$.  

\begin{lemma}\label{lemLW}
Suppose that $J\leq 10L/W$. Then, 
for any $T\in \mathcal T'$, 
\begin{equation}\label{est-ST}
 \sum_{S\in \mathcal T'}\big | S\cap T\big|\leq  \log\big(J\big) |T|\,. 
\end{equation}
\end{lemma}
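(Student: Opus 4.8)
The plan is to estimate $|S\cap T|$ for each $S\in\mathcal T'$ in terms of the angle $\theta(S,T)$ between the directions $e(S)$ and $e(T)$, and then sum the geometric-type series that results. First I would recall the elementary fact that two rectangles of dimensions $L\times W$ whose long axes meet at angle $\theta$ intersect in a region of area $O\big(\min\{LW,\, W^2/\sin\theta\}\big)$: when $\sin\theta \lesssim W/L$ the rectangles are nearly parallel and the intersection is at most comparable to $|T|=LW$, while when $\sin\theta \gtrsim W/L$ the intersection is contained in a parallelogram of side-lengths $O(W)$ and $O(W/\sin\theta)$, hence has area $O(W^2/\sin\theta)$. (One should also use that the centers need not coincide, but since we only want an upper bound on $|S\cap T|$, translating $S$ to share a center with $T$ only increases the intersection, so the worst case is the concentric one.)

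Next I would use the hypothesis that the directions $\{e(S):S\in\mathcal T'\}$ are distinct and evenly distributed on $\mathbb S^1$, with $J=\#\{e(S):S\in\mathcal T'\}$. Fix $T\in\mathcal T'$. The angles $\theta(S,T)$, as $S$ ranges over $\mathcal T'$, are (up to the obvious symmetry $\theta\mapsto\pi-\theta$) roughly the points $k/J$, $k=0,1,\dots,O(J)$, so that for each dyadic scale $2^{-m}$ with $1/J \lesssim 2^{-m}\lesssim 1$ there are $O(2^{-m}J)$ rectangles $S$ with $\theta(S,T)\sim 2^{-m}$, while there are $O(1)$ rectangles with $\theta(S,T)\lesssim 1/J$. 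Now split the sum $\sum_{S\in\mathcal T'}|S\cap T|$ into dyadic angular shells. For $\theta(S,T)\sim 2^{-m}$ with $2^{-m}\gtrsim W/L$ — which, using $J\le 10L/W$, covers all scales down to $\sim 1/J$ — each term is $O(W^2/2^{-m}) = O(2^m W^2)$, and there are $O(2^{-m}J)$ such $S$, giving a contribution $O(2^{-m}J\cdot 2^m W^2) = O(JW^2)$ per scale. There are $O(\log J)$ dyadic scales between $1/J$ and $1$, so these shells contribute $O(JW^2\log J)$. For the near-parallel rectangles, $\theta(S,T)\lesssim 1/J$ forces $|S\cap T|=O(LW)$; since $1/J\gtrsim W/(10L)$, there are only $O(1)$ such $S$, contributing $O(LW)=O(|T|)$. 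Combining and using $JW^2\lesssim LW=|T|$ (again from $J\le 10L/W$), the whole sum is $O(|T|\log J)$, which is the claimed bound (the implicit constants can be absorbed, or one checks the constants are $\le 1$ as the statement demands, by being slightly careful with the evenly-distributed count).

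The main obstacle I anticipate is bookkeeping rather than conceptual: making the count ``$O(2^{-m}J)$ rectangles $S$ with $\theta(S,T)\sim 2^{-m}$'' precise enough that the constants actually multiply out to give the clean inequality $\sum_{S}|S\cap T|\le \log(J)\,|T|$ with constant $1$ — the phrase ``evenly distributed in $\mathbb S^1$'' has to be pinned down (presumably $\#\{e(S):\ \angle(e(S),v)\le\phi\}$ is comparable to $\phi J$ for all $v$ and all $\phi\gtrsim 1/J$), and one has to track the two symmetric choices $\theta$ and $\pi-\theta$ and the fact that each direction in $\mathcal T'$ is represented exactly once. A secondary point to handle carefully is the regime $\theta(S,T)\sim W/L$ where the two bounds $LW$ and $W^2/\sin\theta$ cross; the hypothesis $J\le 10L/W$ is exactly what guarantees the angular mesh $1/J$ is no finer than this crossing scale, so that the ``cross term'' $|S\cap T|=O(LW)$ only ever occurs for $O(1)$ values of $S$. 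I expect no genuine difficulty beyond organizing these constants.
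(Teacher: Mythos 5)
Your argument is correct and is essentially the paper's proof: both bound $|S\cap T|$ by $W^2/\sin\theta$ for the angle $\theta$ between $e(S)$ and $e(T)$, use the even spacing of the $J$ directions, and use $J\lesssim L/W$ to convert $JW^2$ into $|T|$. The only cosmetic difference is that you organize the sum into dyadic angular shells ($O(\log J)$ shells contributing $O(JW^2)$ each), whereas the paper sums the harmonic series $\sum_{j=1}^{J} \frac{W^2}{j/J} = JW^2\sum_{j}\frac1j$ directly; these are the same computation.
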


\begin{proof}
For any $S\in \mathcal T'$, the angle formed by $e(S)$ and $e(T)$ is $j/J$ for some $j\in \{1, \cdots,J\}$. We can estimate the left side of (\ref{est-ST}) by
$$
\sum_{j=1}^{J} \frac{W^2}{j /J} = \sum_{j=1}^{J} \frac{1}{j}  JW^2 \lesssim \log\big(J\big) |T|\,,
$$
since $J\lesssim L/W$. 
\end{proof}

\begin{lemma}\label{lem-L-1T}
Suppose that $J\leq 10L/W$. Then 
\begin{equation}
\sum_{T\in \mathcal T'} |T|\leq \log\big(J\big) \bigg| \bigcup_{T\in \mathcal T'} T\bigg|\,.
\end{equation}
\end{lemma}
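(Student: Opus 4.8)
The plan is to deduce this from Lemma \ref{lemLW} by a standard duality-type argument between the overlap function and the measure of the union. First I would introduce the counting function $g(x) := \sum_{T\in\mathcal T'}\Id_T(x)$, so that $\int g = \sum_{T\in\mathcal T'}|T|$ and $\{g>0\} = \bigcup_{T\in\mathcal T'}T$. The inequality to be proven is then $\int g \le \log(J)\,|\{g>0\}|$. The natural route is to bound $\int g^2$ from above by $\log(J)\int g$ via Lemma \ref{lemLW}, and then bound $\int g$ from above in terms of $\int g^2$ and $|\{g>0\}|$ via Cauchy--Schwarz.

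Carrying this out: expanding the square, $\int g^2 = \sum_{T,S\in\mathcal T'}|T\cap S|$. Grouping by the first index and applying Lemma \ref{lemLW} to each $T\in\mathcal T'$ (whose hypothesis $J\le 10L/W$ is exactly the standing assumption) gives $\sum_{S\in\mathcal T'}|T\cap S|\le \log(J)\,|T|$, hence $\int g^2 \le \log(J)\sum_{T\in\mathcal T'}|T| = \log(J)\int g$. On the other hand, by Cauchy--Schwarz applied on the support of $g$,
\begin{equation*}
\int g = \int_{\{g>0\}} g \le \Big(\int g^2\Big)^{1/2}\,|\{g>0\}|^{1/2}.
\end{equation*}
Combining the two displays, $\int g \le \big(\log(J)\int g\big)^{1/2}|\{g>0\}|^{1/2}$; squaring and dividing by $\int g$ (which we may assume is nonzero, the statement being trivial otherwise) yields $\int g \le \log(J)\,|\{g>0\}|$, which is precisely the claimed bound $\sum_{T\in\mathcal T'}|T|\le \log(J)\big|\bigcup_{T\in\mathcal T'}T\big|$.

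There is no serious obstacle here; the only point requiring a little care is that Lemma \ref{lemLW} is stated for a fixed $T\in\mathcal T'$ summed against all $S\in\mathcal T'$, so one must make sure the double sum $\int g^2$ is organized so that the outer variable ranges exactly over $\mathcal T'$ — which it does, since $g$ is indexed by $\mathcal T'$ on both sides. One should also note that the constant $\log(J)$ produced this way is the same (not squared) as in Lemma \ref{lemLW}, because the Cauchy--Schwarz step only costs a square root of it; if a cleaner constant were desired one could instead argue directly that $\int g \le \|g\|_\infty^{1/2}(\int g)^{1/2}|\{g>0\}|^{1/2}$ is false in general, so the $L^2$ route above is the right one. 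The argument uses only that distinct rectangles in $\mathcal T'$ point in distinct directions and that $J\lesssim L/W$, both of which are hypotheses carried over verbatim.
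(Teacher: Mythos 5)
Your proof is correct and is essentially identical to the paper's: both introduce the counting function $\sum_{T}\Id_T$, bound its $L^2$ norm squared by $\log(J)\sum_T|T|$ via Lemma \ref{lemLW}, and combine with Cauchy--Schwarz on the support of the union. No issues.
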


\begin{proof}
By Cauchy-Schwarz inequality, we have 
\begin{equation}\label{CS}
 \sum_{T\in \mathcal T'} |T| \leq \big\| \sum_{T}\Id_T\big\|_2  \bigg| \bigcup_{T\in \mathcal T'} T\bigg|^{1/2}\,.
  \end{equation}
On the other hand, by Lemma \ref{lemLW}, we see that 
\begin{equation}\label{sin}
 \big\| \sum_{T}\Id_T\big\|^2_2\leq \sum_{T}\sum_S\big|S\cap T\big|\leq  \log\big(J\big) \sum_T|T|\,. 
  \end{equation}  
The desired inequality (\ref{lem-L-1T}) now follows from (\ref{CS}) and (\ref{sin}).

\end{proof}

We need a theorem of Wonggkew \cite {Won}, describing quantitatively the volumes of neighborhoods of real algebraic varieties.

\begin{theorem}[Wongkew]\label{won}
Let $P$ be a non-zero polynomial of degree $D$ on $\mathbb R^n$. Then 
$$
 \big|  B^n(L) \cap {\mathcal N}_{\rho} Z(P) \big| \leq C_n D \rho L^{n-1}\,
$$
where $B^n(L)$ is an $L$-ball in $\mathbb R^n$ and ${\mathcal N}_{\rho} Z(P)$ is a $\rho$-neighborhood of 
the variety $Z(P)$. 
\end{theorem}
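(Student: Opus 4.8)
The essential content of the estimate is geometric: up to constants depending only on $n$, the volume of $\mathcal N_\rho Z(P)\cap B^n(L)$ is comparable to $\rho$ times the $(n-1)$-dimensional measure of $Z(P)$ inside a slightly larger ball, and the degree $D$ enters only through the fundamental theorem of algebra, which bounds by $D$ the number of points in which a line not contained in $Z(P)$ can meet it. Accordingly, the plan is (i) to bound the surface measure of $Z(P)$ by an integral-geometric argument, and (ii) to convert this bound into the stated volume bound for the tube. We may assume $\rho\le L$ (otherwise $\mathcal N_\rho Z(P)\cap B^n(L)\subseteq B^n(L)$ and the inequality is trivial) and $Z(P)\ne\emptyset$; replacing $P$ by the product of its real-vanishing irreducible factors, $Z(P)$ becomes a genuine, possibly singular, hypersurface with singular locus $Z(P)\cap Z(\nabla P)$ of dimension $\le n-2$.

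For (i) I would use the linear Crofton (Cauchy) formula: the $(n-1)$-volume of the rectifiable set $Z(P)$ is, up to a constant $c_n$, the integral over the space of affine lines $\ell$ — with respect to kinematic measure — of $\#(\ell\cap Z(P))$. The lines meeting $B^n(3L)$ carry total measure $\asymp_n L^{n-1}$; for each such line $\ell$ with $\ell\not\subseteq Z(P)$ the restriction $P|_\ell$ is a nonzero univariate polynomial of degree $\le D$, so $\#(\ell\cap Z(P))\le D$; and the lines contained in $Z(P)$ form a null set, since $P\not\equiv 0$. Hence $\mathcal H^{n-1}(Z(P)\cap B^n(3L))\lesssim_n D\,L^{n-1}$.

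For (ii), fix a generic direction $v\in S^{n-1}$ and foliate $\mathbb R^n$ by the lines $\ell_u=u+\mathbb R v$, $u\in v^\perp$. By Fubini,
\[
 \big|\mathcal N_\rho Z(P)\cap B^n(L)\big|\ =\ \int_{v^\perp\cap B^n(L)}\big|\{\,t:\ u+tv\in \mathcal N_\rho Z(P)\,\}\big|\,du .
\]
A point $u+tv$ lies within $\rho$ of $Z(P)$ precisely when there is $w\in Z(P)$ lying $\rho$-close to $\ell_u$ with $\langle w,v\rangle$ within $\rho$ of $t$; describing the $k\le D$ continuous branches $t_1(u'),\dots,t_k(u')$ of the roots of $P$ along the nearby lines $\ell_{u'}$, one obtains
\[
 \big|\{\,t:\ u+tv\in \mathcal N_\rho Z(P)\,\}\big|\ \lesssim_n\ D\rho\ +\ \sum_{\alpha\le k}\ \operatorname*{osc}_{\,|u'-u|\le\rho}\, t_\alpha(u') .
\]
Integrating in $u$, the first term contributes $\lesssim_n D\rho L^{n-1}$; the oscillation term is at most $\rho\int_{v^\perp\cap B^n(CL)}\sum_\alpha|\nabla t_\alpha(u')|\,du'$, which by the area formula is $\le\rho$ times the sum of the $\mathcal H^{n-1}$-measures of the graph-pieces of the $t_\alpha$, hence $\le\rho\,\mathcal H^{n-1}(Z(P)\cap B^n(CL))\lesssim_n D\rho L^{n-1}$ by (i). Combining the two displays yields the theorem.

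The hard part is the oscillation estimate just used. A branch $t_\alpha$ can be arbitrarily steep, even singular, precisely where $Z(P)$ is nearly tangent to $v$, i.e. near $Z(P)\cap Z(\langle\nabla P,v\rangle)$, so $\operatorname{osc}_{B_\rho}t_\alpha$ is not controlled pointwise by $\rho\,|\nabla t_\alpha|$, and a crude fractional-integral bound loses exactly at these branch points. This is the obstruction resolved in Wongkew's proof by an induction on $n$: one splits the base $v^\perp$ into a transverse part, on which $\langle\nabla P,v\rangle$ is bounded away from $0$ and the branches are Lipschitz with a dimensional constant, and a tangential part, which is a $\rho$-neighborhood of a variety of dimension $\le n-2$ and degree $O_n(D)$, handled by the $(n-1)$-dimensional case of the theorem. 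Propagating constants that depend only on $n$ through this induction is the real bookkeeping; the single factor of $D$, in every step, still comes only from the one-variable bound $\#(\ell\cap Z(P))\le D$.
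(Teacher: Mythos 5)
This theorem is quoted from Wongkew's paper \cite{Won} and the paper gives no proof of it, so the only question is whether your argument stands on its own; it does not quite. Your step (i) is sound: the Cauchy--Crofton bound $\mathcal{H}^{n-1}(Z(P)\cap B^n(CL))\lesssim_n D L^{n-1}$, with the factor $D$ coming from the one-variable root count and the lines contained in $Z(P)$ forming a null set, is correct and is indeed the standard source of the linear dependence on $D$. The gap is in step (ii), and it is exactly the one you flag yourself: a bound on $\mathcal{H}^{n-1}(Z(P))$ does not control $|\mathcal N_\rho Z(P)|/\rho$ at a fixed scale $\rho$ (Hausdorff measure only controls Minkowski content in the limit $\rho\to 0$, and for non-algebraic sets the two can differ arbitrarily at fixed scale), and correspondingly your inequality $\int_u \mathrm{osc}_{|u'-u|\le\rho}\,t_\alpha(u')\,du \le \rho\int |\nabla t_\alpha(u')|\,du'$ is false in general: oscillation over a $\rho$-ball in a base of dimension $n-1\ge 2$ is not dominated by $\rho$ times the $L^1$-average of the gradient (this would be a Morrey-type embedding that fails for $W^{1,1}$), and it degenerates precisely near the locus where $Z(P)$ is tangent to $v$ or singular. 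Naming the fix --- induction on $n$, splitting the base into a transverse part where the branches are uniformly Lipschitz and a tangential part which is a $\rho$-neighborhood of the lower-dimensional critical variety $Z(P)\cap Z(\langle\nabla P,v\rangle)$ --- is the right description of Wongkew's strategy, but you have only named it: you do not set up the inductive statement, control the degree and dimension of the critical locus, or check that the constants close (e.g.\ that the codimension-two contribution $ (D\rho)^2 L^{n-2}$ is absorbed into $D\rho L^{n-1}$, which requires first disposing of the regime $D\rho>L$ where the claim is trivial). As written, the crux of the theorem is asserted rather than proved.

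One smaller point: before invoking Crofton you should justify that $Z(P)$ is $(n-1)$-rectifiable with locally finite $\mathcal{H}^{n-1}$-measure (true for real algebraic sets, but it needs to be said, and after replacing $P$ by its real-vanishing factors you must also handle components of $Z(P)$ of dimension $\le n-2$, for which the Crofton count of a generic line is $0$ and the tube bound must come from the inductive, lower-dimensional case rather than from step (i)).
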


\begin{lemma}\label{lem-geo}
Let $\mathcal T$ be the collection of rectangles described in the beginning of Section \ref{ge}.
Suppose that $J\leq 10L/W$ and every rectangle $T\in \mathcal T$ is contained in $B^2(K_1L)\cap \mathcal N_{K_2W}Z(P)$,
where $P$ is a non-zero polynomial of degree $D$ on $\mathbb R^2$. 
then the cardinality of $ \{e(T): T\in \mathcal T\}$ is at most $ cDK_1K_2\log(J)$, where $c$ is an absolute constant and $K_1, K_2\geq 1$. 
\end{lemma}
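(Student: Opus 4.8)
The plan is to reduce to the auxiliary subcollection $\mathcal{T}'$ and then play a lower bound for $\sum_{T\in\mathcal{T}'}|T|$ against the upper bound that Wongkew's theorem provides for the volume of the tube $\mathcal N_{K_2W}Z(P)$ (restricted to the ball $B^2(K_1L)$) that contains all of $\mathcal{T}'$.

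First I would fix $\mathcal{T}'\subset\mathcal{T}$ as in the setup of Section \ref{ge}, so that distinct members of $\mathcal{T}'$ point in distinct directions and $\{e(T):T\in\mathcal{T}'\}=\{e(T):T\in\mathcal{T}\}$; hence $|\mathcal{T}'|=J$ and each $T\in\mathcal{T}'$ is an $L\times W$-rectangle with $|T|=LW$. Since $J\le 10L/W$ by hypothesis, Lemma \ref{lem-L-1T} applies to $\mathcal{T}'$ and gives
$$J\,LW=\sum_{T\in\mathcal{T}'}|T|\le\log(J)\Bigl|\bigcup_{T\in\mathcal{T}'}T\Bigr|.$$
Next, because $\mathcal{T}'\subset\mathcal{T}$, every $T\in\mathcal{T}'$ lies in $B^2(K_1L)\cap\mathcal N_{K_2W}Z(P)$, hence so does the union. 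Applying Theorem \ref{won} with $n=2$, ball radius $K_1L$, and tube width $\rho=K_2W$ yields
$$\Bigl|\bigcup_{T\in\mathcal{T}'}T\Bigr|\le\bigl|B^2(K_1L)\cap\mathcal N_{K_2W}Z(P)\bigr|\le C_2\,D\,(K_2W)(K_1L)=C_2\,D\,K_1K_2\,LW.$$
Combining the two displays and cancelling the common positive factor $LW$ gives $J\le C_2\,D\,K_1K_2\log(J)$, which is the claimed bound with $c=C_2$.

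I do not expect a serious obstacle here: the argument is essentially a bookkeeping step once Lemma \ref{lem-L-1T} and Wongkew's theorem are available. The two points that need a line of care are (i) verifying that the hypothesis $J\le 10L/W$ required by Lemma \ref{lem-L-1T} is precisely the one assumed in the statement, and (ii) observing that the constant $C_n=C_2$ in Wongkew's estimate depends only on the ambient dimension, so that the rescalings $L\mapsto K_1L$ and $\rho\mapsto K_2W$ contribute only the factor $K_1K_2$ and nothing worse. No induction on scales and no further geometry of $Z(P)$ is needed.
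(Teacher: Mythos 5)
Your proof is correct and is essentially identical to the paper's own argument: both apply Lemma \ref{lem-L-1T} to the direction-separated subfamily $\mathcal T'$ and then bound the measure of $\bigcup_{T\in\mathcal T'}T$ by Wongkew's theorem applied to $B^2(K_1L)\cap\mathcal N_{K_2W}Z(P)$, cancelling the factor $LW$ to conclude. No differences worth noting.
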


\begin{proof}
 Using Lemma \ref{lem-L-1T} and Theorem \ref{won},  we see that 
\begin{equation}\label{w1}
 \sum_{T\in \mathcal T'} |T| \leq  \log\big(J\big)  \big|B^2(K_1L)\cap \mathcal N_{K_2W}Z(P)\big|
 \lesssim   DK_1K_2(LW) \log\big(J\big)\,.
 \end{equation}
  Notice that $|T|=LW$ and $\sum_{T\in \mathcal R[L, W]}$ equals to the cardinality of the set $\mathcal R[L, W]$,
  then we see that (\ref{w1}) implies the desired upper bound $cDK_1K_2\log(J)$ for the cardinality of $ \{e(T): T\in \mathcal T\}$. 
\end{proof}

\section{Transverse and tangential rectangles}

For any $k\in\mathbb Z$,  let $\mathcal Q_{k}$ denote a set consisting of all dyadic cubes in $\mathbb R^2$, whose side-length is $2^k$.  Let $k^*$ satisfy $R^d\leq 2^{k^*}\leq  2 R^{d}$. Without loss of generality, we can assume that  the ball $B^2(R^d)$ is contained in a dyadic 
  $2^{k^*}$-cube $Q^{*}\in\mathcal \mathcal Q_{k^*}$, which is the maximal dyadic cube we need to take into account. 

For any dyadic number $\Xi\in [1, R^{d-2}]$,  we decompose the dyadic cube $Q^*$ into $2^{k^*}/\Xi$-cubes in 
$\mathcal Q_{k^*-\log_2\Xi}$.  We use ${\mathcal Q}^*_{\Xi}$ to denote the set of those $2^{k^*}/\Xi$-sub-cubes of $Q^*$.  

\begin{definition}\label{def-X}
For any $Q\in {\mathcal Q}^*_{\Xi}$,  $\bT_{\Xi}[Q]$ is a set consisting of all $T\in \bT$ such that 
\begin{itemize}
\item  $T^*\cap W\cap Q\neq \emptyset $
\item  If $z$ is any non-singular point of $Z(P)$ lying in $3Q\cap 5T^*$, then 
$${\rm Angle} \big(e(T), T_z\big[Z(P)\big]\big) \leq  \Xi  R^{-d+1+\delta} \,.$$
Recall that $e(T)$ is the unit vector in the direction of the rectangle $T$, and $T_z[Z(P)]$ stands for the tangent space to the variety $Z(P)$ at the point $z$. 
\end{itemize}
\end{definition}

From this definitions, we see immediately that if $T$ satisfies $T^*\cap W \cap Q\neq \emptyset$ and $T\notin \bT_{\Xi}[Q]$, 
then there is a non-singular point $z$ of $Z(P)$ lying in $3Q\cap 5T^*$ such that
\begin{equation}\label{an-Xi}
 {\rm Angle} \big(e(T), T_z\big[Z(P)\big]\big)  > \Xi  R^{-d+1+\delta} \,.
 \end{equation}
 
\vspace{0.5cm} 
 
Let $\Delta\in [R^\delta/2, R/2^5]$ be any dyadic number. For any such a dyadic number, we partition the dyadic cube $Q^*$ into 
$\frac{2^{k^*}}{R^{d-2}\Delta}$-cubes in $ \mathcal Q_{k^*-\log_2(R^{d-2}\Delta)}$.  To simplify the notations, we set
${\mathcal Q}^{**}_\Delta$ to be the set consisting of all those $\frac{2^{k^*}}{R^{d-2}\Delta}$-cubes lying in $Q^*$. 

\begin{definition}\label{def-D}
For any $Q\in {\mathcal Q}^{**}_\Delta$,  $\bT_\Delta[Q]$ is a set 
consisting of all $T\in \bT$ such that 
\begin{itemize}
\item  $T^*\cap W\cap Q\neq \emptyset $
\item  If $z$ is any non-singular point of $Z(P)$ lying in $3Q\cap 5T^*$, then 
$${\rm Angle} \big(e(T), T_z\big[Z(P)\big]\big) \leq  \Delta R^{-1} \,.$$
\end{itemize}
\end{definition}

From Definition \ref{def-D},  it follows that if $T$ satisfies $T^*\cap W \cap Q\neq \emptyset$ and $T\notin \bT_{\Delta}[Q]$, 
then there is a non-singular point $z$ of $Z(P)$ lying in $3Q\cap 5T^*$ such that
\begin{equation}\label{an-Del}
 {\rm Angle} \big(e(T), T_z\big[Z(P)\big]\big)  > \Delta R^{-1} \,.
 \end{equation}
We will use this inequality in our proof.  When (\ref{an-Del}) holds, the rectangle $T$ is transverse to $Z(P)$ at some point 
$z\in 3Q$ with an angle at least $\Delta R^{-1}$.  \\

\begin{lemma}\label{lem-q-q'}
1) Let $Q'\in \mathcal Q^*_{\Xi}$, $Q\in \mathcal Q^*_{\Xi/2}$ and $Q'\subseteq Q$. 
If $T^*\cap W\cap Q'\neq \emptyset$, then 
$ T\notin \bT_{\Xi}[Q']$ implies $T\notin \bT_{\Xi/2}[Q]$. \\
2) Let $Q'\in \mathcal Q^{**}_{\Delta}$, $Q\in \mathcal Q^*_{\Delta/2}$ and $Q'\subseteq Q$. 
If $T^*\cap W\cap Q'\neq \emptyset$, then 
$ T\notin \bT_{\Delta}[Q']$ implies $T\notin \bT_{\Delta/2}[Q]$. 
\end{lemma}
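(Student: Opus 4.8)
The plan is to prove both parts by the same contrapositive-flavoured argument, exploiting the nesting $Q' \subseteq Q$ together with the way the threshold angle scales when $\Xi$ (resp.\ $\Delta$) is halved. I focus on part 1); part 2) is verbatim the same once $\Xi R^{-d+1+\delta}$ is replaced by $\Delta R^{-1}$ and $\mathcal Q^*_\Xi$ by $\mathcal Q^{**}_\Delta$, so I would only remark on the substitution rather than rewrite it.

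First I would unwind the hypothesis. Assume $T^*\cap W\cap Q'\neq\emptyset$ and $T\notin\bT_\Xi[Q']$. Since $Q'\subseteq Q$ we certainly have $T^*\cap W\cap Q\neq\emptyset$, so the first bullet in Definition \ref{def-X} (at level $\Xi/2$ and cube $Q$) is satisfied; the question is only whether the angle condition can fail. By the consequence of Definition \ref{def-X} displayed in (\ref{an-Xi}), from $T\notin\bT_\Xi[Q']$ we obtain a non-singular point $z\in Z(P)$ with $z\in 3Q'\cap 5T^*$ such that
\begin{equation*}
{\rm Angle}\big(e(T), T_z[Z(P)]\big) > \Xi R^{-d+1+\delta}.
\end{equation*}
The key geometric observation is that $3Q'\subseteq 3Q$: indeed $Q'\subseteq Q$, and both dilations are about the respective centres, but because $Q'$ lies inside the larger cube $Q$ the $3$-dilate of $Q'$ is contained in the $3$-dilate of $Q$ (the side length triples in each case, and $Q'$ is already inside $Q$, so the worst case is $Q'$ touching the boundary of $Q$, in which case $3Q'$ still fits inside $3Q$ since $\operatorname{diam}(3Q')\le 3\operatorname{diam}(Q')\le \tfrac{3}{\Xi/\Xi}\cdots$). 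Hence the same point $z$ lies in $3Q\cap 5T^*$ and is a non-singular point of $Z(P)$.

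Now I compare the two thresholds. At level $\Xi/2$ the forbidden-angle threshold in Definition \ref{def-X} for the cube $Q\in\mathcal Q^*_{\Xi/2}$ is $(\Xi/2) R^{-d+1+\delta}$, which is strictly smaller than $\Xi R^{-d+1+\delta}$. Therefore the point $z$ we produced witnesses
\begin{equation*}
{\rm Angle}\big(e(T), T_z[Z(P)]\big) > \Xi R^{-d+1+\delta} > (\Xi/2) R^{-d+1+\delta},
\end{equation*}
so the second bullet of Definition \ref{def-X} fails for $T$ relative to $Q$ at level $\Xi/2$, i.e.\ $T\notin\bT_{\Xi/2}[Q]$. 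This is exactly the claim. For part 2) the identical chain of implications applies with $\Delta R^{-1}$ in place of $\Xi R^{-d+1+\delta}$, using (\ref{an-Del}) instead of (\ref{an-Xi}) and the containment $3Q'\subseteq 3Q$ for $Q'\in\mathcal Q^{**}_\Delta$, $Q\in\mathcal Q^*_{\Delta/2}$ with $Q'\subseteq Q$.

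The only point that needs genuine care — and the place where I expect the referee to want a line of justification — is the containment $3Q'\subseteq 3Q$ given merely $Q'\subseteq Q$ with $Q'$ a sub-cube from the halved partition: one must check that halving the parameter exactly doubles the side length, so that $Q$ has side length twice that of $Q'$, and then a short calculation with centres shows the concentric triples nest. Everything else is bookkeeping with the definitions. I would also note explicitly, for safety, that the non-singularity of the witnessing point $z$ is preserved trivially since it is a property of $z$ and $Z(P)$ alone, independent of which cube we view $z$ as sitting in.
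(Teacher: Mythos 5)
Your argument is correct and is essentially identical to the paper's proof: use (\ref{an-Xi}) (resp.\ (\ref{an-Del})) to extract a non-singular witness $z\in 3Q'\cap 5T^*\subseteq 3Q\cap 5T^*$ whose angle exceeds $\Xi R^{-d+1+\delta} > (\Xi/2)R^{-d+1+\delta}$, which violates the angle condition for $\bT_{\Xi/2}[Q]$. The only cosmetic issue is the garbled parenthetical in your justification of $3Q'\subseteq 3Q$; the clean statement is that for cubes $Q'\subseteq Q$ of side lengths $\ell'\leq\ell$, any point of $3Q'$ is within $\frac{\ell-\ell'}{2}+\frac{3\ell'}{2}=\frac{\ell}{2}+\ell'\leq\frac{3\ell}{2}$ of the center of $Q$ in each coordinate.
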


\begin{proof}
We only present a proof for Part 1). Part 2) can be proved similarly.  Since $T^*\cap W\cap Q'\neq \emptyset$, 
by Definition \ref{def-X}, we see that there is a non-singular point $z$ of $Z(P)$ lying in $3Q'\cap 5T^*$ such that
\begin{equation}\label{an-Xi1}
 {\rm Angle} \big(e(T), T_z\big[Z(P)\big]\big)  > \Xi  R^{-d+1+\delta} \,.
 \end{equation}
Clearly it yields that $T\notin  \bT_{\Xi/2}[Q]$. 
\end{proof}

\begin{lemma}\label{lem-ind}
For $\Xi\geq 2$, 
\begin{eqnarray*}
 & & \bigg( \sum_{Q\in \mathcal Q^*_{\Xi/2}}  \int_{W\cap Q} \big| \sum_{T\notin \bT_{\Xi/2}[Q]}f_T\big|^p \bigg)^{\frac{1}{p}}\\
 & \leq & 
 \bigg( \sum_{Q'\in \mathcal Q^*_{\Xi}}  \int_{W\cap Q'} \big| \sum_{T\in \bT_{\Xi}[Q']\backslash \bT_{\Xi/2}[\ti Q']}f_T\big|^p \bigg)^{\frac{1}{p}}\\
& &    +  \bigg( \sum_{Q'\in \mathcal Q^*_{\Xi}}  \int_{W\cap Q'} \big| \sum_{T\notin \bT_{\Xi}[Q']} f_T\big|^p \bigg)^{\frac{1}{p}} + C_\delta R^{-100/\delta^{10}}\|f\|_2\,.
  \end{eqnarray*}
Here $\ti Q$ denotes the unique parent of the dyadic cube $Q$. 
The inequality above still holds if $\Xi$ is replaced by $\Delta$. 
\end{lemma}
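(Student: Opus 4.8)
The inequality is a one-step ``induction on scales'' passage for the tangential classification: we descend from scale $\Xi/2$ (with cubes $Q\in\mathcal Q^*_{\Xi/2}$) to scale $\Xi$ (with cubes $Q'\in\mathcal Q^*_\Xi$), splitting the contribution of the ``too transverse'' tiles $T\notin\bT_{\Xi/2}[Q]$ into a piece that became tangential at the finer scale ($T\in\bT_\Xi[Q']$) and a piece that is still transverse there ($T\notin\bT_\Xi[Q']$). The plan is as follows. First I would fix $Q\in\mathcal Q^*_{\Xi/2}$ and restrict attention to tiles with $T^*\cap W\cap Q\neq\emptyset$, since all others contribute a term bounded by the rapidly decaying tail $C_\delta R^{-100/\delta^{10}}\|f\|_2$ (this is the origin of the last summand, absorbed at the end by a crude count of cubes and tiles, together with the Schwartz-tail decay of $\phi_T$ encoded in the passage from $T$ to $T^*$). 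Then I would subdivide $Q$ into its children $Q'\in\mathcal Q^*_\Xi$ (there are $O(1)$ of them — a bounded number, since $\Xi/2\mapsto\Xi$ is a single dyadic step), write $\int_{W\cap Q}=\sum_{Q'\subseteq Q}\int_{W\cap Q'}$, and on each $W\cap Q'$ split the tile sum as
\[
\sum_{T\notin\bT_{\Xi/2}[Q]}f_T \;=\; \sum_{\substack{T\in\bT_\Xi[Q']\\ T\notin\bT_{\Xi/2}[Q]}}f_T \;+\; \sum_{\substack{T\notin\bT_\Xi[Q']\\ T\notin\bT_{\Xi/2}[Q]}}f_T \,,
\]
keeping only tiles with $T^*\cap W\cap Q'\neq\emptyset$ in each (the rest again going into the tail).

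The second key point is the containment of index sets. For the second sum, if $T\notin\bT_\Xi[Q']$ and $T^*\cap W\cap Q'\neq\emptyset$, then by \eqref{an-Xi} there is a non-singular $z\in 3Q'\cap 5T^*$ with ${\rm Angle}(e(T),T_z[Z(P)])>\Xi R^{-d+1+\delta}>(\Xi/2)R^{-d+1+\delta}$, so $T\notin\bT_{\Xi/2}[Q']$ (and indeed the angle condition transfers to the parent — this is exactly the content of Lemma \ref{lem-q-q'}, Part 1, read in the direction $Q'\subseteq\ti Q'=Q$). Hence the second sum is a subsum of $\sum_{T\notin\bT_\Xi[Q']}f_T$, and after applying the triangle inequality in $\ell^p$ over the $O(1)$ children and then over $Q'\in\mathcal Q^*_\Xi$, it is controlled by the second term on the right-hand side of the claimed inequality. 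For the first sum, the constraint $T\notin\bT_{\Xi/2}[Q]$ with $Q=\ti Q'$ is literally the description $T\in\bT_\Xi[Q']\setminus\bT_{\Xi/2}[\ti Q']$, giving the first term on the right. Combining via the triangle inequality $\|g_1+g_2\|_p\le\|g_1\|_p+\|g_2\|_p$ on each $W\cap Q'$, then Minkowski over the bounded number of children of each $Q$, then summing the $p$-th powers over $Q'\in\mathcal Q^*_\Xi$ (using that each $Q'$ has a unique parent, so there is no overcounting), yields the stated bound. The $\Delta$-version is identical, using \eqref{an-Del} and Lemma \ref{lem-q-q'}, Part 2, in place of their $\Xi$-counterparts, with the scale relation $\mathcal Q^{**}_\Delta$ vs.\ $\mathcal Q^*_{\Delta/2}$ as recorded there.

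The only genuinely delicate point — and the one I would write out carefully — is the bookkeeping of the error term: one must verify that discarding tiles with $T^*\cap W\cap Q\neq\emptyset$ but $T^*\cap W\cap Q'=\emptyset$ for the relevant child, and tiles whose Schwartz tail reaches $Q$ without the core $T$ doing so, really does cost only $C_\delta R^{-100/\delta^{10}}\|f\|_2$ in $L^p(W\cap Q^*)$. This follows from the rapid decay of $\phi_T$ off $T^*=R^\delta T$: each such $f_T$ contributes, on the region in question, a factor $O_M(R^{-M})$ for every $M$, and there are only polynomially many tiles ($|\bT|\lesssim R^{O(1)}$) and cubes ($|\mathcal Q^*_\Xi|,|\mathcal Q^{**}_\Delta|\lesssim R^{O(1)}$), with $\|f_T\|_p\lesssim R^{O(1)}\|f\|_2$ by Bernstein on $\omega(T)$; choosing $M$ large relative to $1/\delta^{10}$ and these polynomial losses gives the claimed bound. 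Everything else is the triangle inequality and the set-theoretic identities above, so I expect no further obstacle.
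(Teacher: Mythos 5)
Your argument is correct and follows essentially the same route as the paper: split each $Q\in\mathcal Q^*_{\Xi/2}$ into its children $Q'\in\mathcal Q^*_\Xi$, decompose the tile sum according to membership in $\bT_\Xi[Q']$, apply Minkowski, identify the first piece as $\bT_\Xi[Q']\setminus\bT_{\Xi/2}[\ti Q']$, and use Lemma \ref{lem-q-q'} together with the Schwartz-tail decay to reduce the second piece to $\sum_{T\notin\bT_\Xi[Q']}f_T$ up to the $C_\delta R^{-100/\delta^{10}}\|f\|_2$ error. The only loose phrase is calling the second piece a ``subsum'' that is ``controlled by'' the full sum --- a subsum is not in general dominated by the full sum in $L^p$; what makes the step valid is precisely the containment you prove just before (Lemma \ref{lem-q-q'}, which forces every $T\notin\bT_\Xi[Q']$ with $T^*\cap W\cap Q'\neq\emptyset$ to also satisfy $T\notin\bT_{\Xi/2}[\ti Q']$), so the two index sets, and hence the two sums, differ only by tiles contributing the negligible tail.
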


\begin{proof}
Since $Q$'s are disjoint, the left side of the inequality in the lemma is controlled by 
$$
\bigg( \int_{W} \bigg|\sum_{Q\in \mathcal Q^*_{\Xi/2}}   \sum_{T\notin \bT_{\Xi/2}[Q]}f_T\Id_Q \bigg|^p \bigg)^{\frac{1}{p}}
= \bigg( \int_{W} \bigg|\sum_{Q\in \mathcal Q^*_{\Xi/2}} \sum_{\substack{Q'\in \mathcal Q^*_{\Xi} \\Q'\subset Q}}  \sum_{T\notin \bT_{\Xi/2}[Q]}f_T\Id_{Q'} \bigg|^p \bigg)^{\frac{1}{p}}\,,
$$
which can be split into two parts, by Minkowski's inequality, 
$$
 \bigg( \!\int_{W} \bigg|\!\!\sum_{Q\in \mathcal Q^*_{\Xi/2}} \sum_{\substack{Q'\in \mathcal Q^*_{\Xi} \\Q'\subset Q}}  \sum_{ \substack{T\notin \bT_{\Xi/2}[Q] \\ T\in \bT_\Xi[Q']}}f_T\Id_{Q'} \bigg|^p \bigg)^{\frac{1}{p}}
\!\!+
 \bigg( \!\int_{W}\!\!\bigg|\sum_{Q\in \mathcal Q^*_{\Xi/2}} \sum_{\substack{Q'\in \mathcal Q^*_{\Xi} \\Q'\subset Q}}  \sum_{\substack{T\notin \bT_{\Xi/2}[Q] \\ T\notin \bT_\Xi[Q']}}f_T\Id_{Q'} \bigg|^p \bigg)^{\frac{1}{p}}\,.
$$
In the first term above, $Q$ is the parent of $Q'$. Thus by Fubini's theorem, we see that it becomes 
\begin{equation}\label{1-est0}
 \bigg( \sum_{Q'\in \mathcal Q^*_{\Xi}}\int_{W\cap Q'} \bigg| 
 \sum_{ T\in \bT_\Xi[Q']\backslash\bT_{\Xi/2}[\ti Q']}f_T\Id_{Q'} \bigg|^p \bigg)^{\frac{1}{p}}\,.
 \end{equation}
 Using Lemma \ref{lem-q-q'} and noticing the rapid decay contributions from those $T$'s with $T^*\cap W\cap Q'=\emptyset$, 
 the second term can be dominated by 
 \begin{equation}\label{2-est0}
  \bigg( \sum_{Q'\in \mathcal Q^*_{\Xi}}  \int_{W\cap Q'} \big| \sum_{T\notin \bT_{\Xi}[Q']} f_T\big|^p \bigg)^{\frac{1}{p}} + C_\delta R^{-100/\delta^{10}}\|f\|_2\,.
 \end{equation}
Then the desired inequality follows from (\ref{1-est0}) and (\ref{2-est0}).
\end{proof}

Using Definition \ref{def-X}, Definition \ref{def-D}, Lemma {\ref{lem-ind}} inductively, we can dominate 
\begin{equation}
  \bigg( \int_{W\cap Q^*}\big|  \sum_{T} f_T \big|^p\bigg)^{\frac{1}{p}} 
  \leq   {\rm Tang} + {\rm Tran} + C_\delta R^{-100/\delta} \|f\|_2\,,
 \end{equation}
 where ${\rm Tang}$ and ${\rm Tran}$ denote the contributions from the tangential rectangles and the transverse rectangles, respectively.   The tangential contribution can be expressed as 
\begin{equation}
{\rm Tang} :=
\sum_{\Xi} \bigg( \sum_{Q\in {\mathcal Q}^*_{\Xi} } \int_{W\cap Q}  \big| \sum_{T\in \bT_{\Xi, {\rm tang}}[Q] } f_T\big|^p\bigg)^{\frac{1}{p}}\,,
\end{equation}
where $\bT_{1, {\rm tang}}[Q] = \bT_{1}[Q^*]$ if $\Xi=1$,  $\bT_{\Xi, {\rm tang}}[Q] =\bT_\Xi[Q]\backslash \bT_{\Xi/2}[\ti Q]$
if $\Xi>1$. 
And the transverse contribution is given precisely by 
\begin{equation}
 {\rm Tran}:=
\sum_{\Delta} \bigg( \sum_{Q\in {\mathcal Q}^{**}_{\Delta} } \int_{W\cap Q}  \big| \sum_{T\in \bT_{\Delta, {\rm tran}}[Q] } f_T\big|^p\bigg)^{\frac{1}{p}}\,,
\end{equation}
where $\bT_{\Delta, {\rm tran}}[Q]=\bT_\Delta[Q]\backslash \bT_{\Delta/2}[\ti Q]$ if $\Delta\leq R/{2^6}$ and 
$\bT_{\Delta, {\rm tran}}=  \big( \bT_\Delta[Q]\big)^c $ otherwise. \\

\section{Algebraic contributions}\label{alg-sec}

In this section, we handle the algebraic contributions ${\rm Tang}$ and ${\rm Tran}$. 
We aim to prove the following proposition. 

\begin{proposition}\label{prop-alg}
For $p\geq 2(d+2)$,  we have 
\begin{equation}
{\rm Tang} + {\rm Tran} \lesssim DR^{-\frac{d}{2}+3\delta} \|f\|_2\,. 
\end{equation}
\end{proposition}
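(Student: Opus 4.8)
The plan is to bound $\mathrm{Tang}$ and $\mathrm{Tran}$ separately, since each is a sum over dyadic scales $\Xi\in[1,R^{d-2}]$ (resp. $\Delta\in[R^\delta/2,R/2^5]$) and there are only $O(\log R)=O(R^{\delta})$ many such scales, so it suffices to prove the bound $DR^{-d/2+2\delta}\|f\|_2$ for each individual summand and then absorb the logarithmic loss into one more factor of $R^\delta$. So fix a scale, say $\Xi$, and a cube $Q\in\mathcal Q^*_\Xi$. The strategy for the tangential term is to exploit that every $T$ contributing to $\bT_{\Xi,\mathrm{tang}}[Q]$ has $e(T)$ making angle $\lesssim \Xi R^{-d+1+\delta}$ with the tangent space of $Z(P)$ at every non-singular point of $Z(P)$ inside $3Q\cap 5T^*$; combined with the fact that $T^*\cap W\cap Q\neq\emptyset$, this pins $T$ inside a slab $B^2(K_1 L)\cap\mathcal N_{K_2 W'}Z(P)$ of the appropriate dimensions (with $L$ the sidelength $2^{k^*}/\Xi$ of $Q$, and $W'$ comparable to $R^{1+\delta}$ times an appropriate power). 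Then Lemma~\ref{lem-geo} caps the number of distinct directions of such $T$'s by $cDK_1K_2\log J$. This reduces the $L^p$ estimate over one cell to an essentially one-dimensional problem: with only $\nu:=cDK_1K_2\log J$ directions available, Hölder (or Cauchy--Schwarz, losing $\nu^{1/2}$) upgrades the $L^2$ information on $\sum_T f_T$ to the desired $L^p$ bound, using that $p\geq 2d+2$ to make the exponents close, and using the local $L^2$ orthogonality $\sum_{Q}\|\sum_{T\in\bT_\Xi[Q]}f_T\|_2^2\lesssim \|f\|_2^2$ (up to bounded overlap of the $Q$'s and of the wave packets, as in \eqref{ort}).

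For the transverse term the idea is the same geometric reduction but the roles swap: when \eqref{an-Del} holds, $T$ crosses $Z(P)$ at some $z\in 3Q$ with angle $\gtrsim \Delta R^{-1}$, so inside the cube $Q\in\mathcal Q^{**}_\Delta$ of sidelength $2^{k^*}/(R^{d-2}\Delta)$ the rectangle $T$ intersects $Z(P)$ transversally. A transverse rectangle meets $Z(P)\cap 3Q$ in a set of length $\lesssim (\Delta R^{-1})^{-1}\cdot$(curvature terms), but more useful is that the number of transverse rectangles through $Q$ of a fixed direction is controlled by how often lines in that direction meet $Z(P)$, i.e. by the degree $D$ (fundamental theorem of algebra, as invoked after \eqref{ort}); combined with the transversality angle this again bounds the number of relevant directions per cube, and then the same Hölder/orthogonality argument finishes it. The dichotomy flagged in the introduction — that for $p\geq 8$ only the extreme scales $\Xi\sim 1$ (fully tangential, $T\subset\mathcal N_{R^{1+\delta}}Z(P)$) and $\Delta\sim R$ (fully transverse, angle $\sim 1$) dominate — should be used to control the sum over intermediate scales: one checks that the per-scale bound actually carries a small gain $R^{-c\delta'}$ (or at worst is geometrically summable in $\log\Xi$) away from the extremes, so the $R^\delta$ from crudely summing $O(\log R)$ scales is in fact not even needed except at the endpoints. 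In writing this I would be careful that the exponent bookkeeping produces exactly $-d/2$ (not $-d/2+O(1)$): this is where $p=2d+2$ is sharp, so the Hölder step must be done at the critical exponent and the power of $R$ from the slab volume (via Theorem~\ref{won}, giving $D\rho L^{n-1}$ with $n=2$) must be tracked exactly.

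The main obstacle I expect is the transverse contribution, precisely the point the author advertises as "the real enemy": because the $N\times N^d$ rectangles do not lie in an $N^{-d}$-neighborhood of $\Gamma$, one cannot rescale a transverse cell and reapply the inductive hypothesis for $\|\sum f_\omega\|_{B^2(R^d)}$ (the rescaled frequency rectangles would no longer be the $\Omega_{R'}$ family for the same curve — $(\xi,\xi^d)$ is "distorted badly under translations"). So the transverse term must be estimated absolutely, and the only tool available is the geometric count of directions per cube together with $L^2$ orthogonality; the delicate part is verifying that, at the critical $p$, this direct count is strong enough, which is exactly why the argument needs $p\geq 2(d+2)$ here (slightly more than the $2d+2$ of Theorem~\ref{thm1}, the gap being made up by the wall-vs-cell dichotomy in Section~2). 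A secondary technical nuisance is handling the non-singular factorization of $P$: since $P$ is a product of non-singular polynomials, "the tangent space $T_z[Z(P)]$" must be interpreted componentwise and one must check that a rectangle transverse to none of the components still gets caught by some $\bT_\Delta[Q]$ with $\Delta$ not too large; I would dispatch this by working one irreducible non-singular factor at a time and paying only an $O(D)$ (or $O(\log D)$) loss, which is harmless against the explicit $D$ on the right-hand side.
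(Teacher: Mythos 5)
Your treatment of ${\rm Tang}$ is essentially the paper's: pin the tangential rectangles inside $B^2(5R^d/\Xi)\cap\mathcal N_{5R^{1+\delta}}Z(P)$, invoke Lemma~\ref{lem-geo} to cap the number of directions by $O(DR^{2\delta})$, and then lose only that (essentially bounded) count via H\"older before applying Bernstein on each $f_\omega$ to convert $L^p$ to $L^2$. That part is fine.

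The transverse part has a genuine gap. You propose to finish ${\rm Tran}$ by ``the same H\"older/orthogonality argument'' after counting directions per cube, but the direction count for $Q\in\mathcal Q^{**}_\Delta$ is $\lesssim DR^{\delta}\Delta$ (Lemma~\ref{geo-tr}), which at the dominant scale $\Delta\sim R$ is $\sim DR$, i.e.\ essentially all of $\Omega_R$. H\"older plus Bernstein then reproduces the trivial bound $R^{-d/2+(d+1)/p}\|f\|_2$, off by a positive power of $R$; no amount of degree-counting repairs this. The paper's actual mechanism, which your proposal is missing, is analytic rather than purely geometric: on each $R$-cube $Q_0$ the Fourier support of $\sum_{T\in\bT^*_{\Delta,{\rm tran}}[Q]}f_T$ lies in an $R^{-1}$-neighborhood of $\Gamma$, so the classical Stein--Tomas inequality gives an $L^6(Q_0)$ bound in terms of $\|\cdot\|_{L^2(Q_0)}$; this is interpolated with an $L^\infty$ bound coming from the measure $\lesssim DR^{-1+\delta}\Delta/R$ of the Fourier support (this is where the direction count actually enters), producing the decisive factor $D^{1/2}(\Delta/R)^{1/2-3/p}R^{-1/2+\delta}$ which is summable in $\Delta$ precisely because $p\geq 8$. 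The remaining gain to reach $R^{-d/2}$ comes from local $L^2$ smallness, $\|f_T\|_{L^2(Q_0)}^2\lesssim R^{-(d-1)}\|f_T\|_2^2$ on an $R$-cube, combined with Guth's transversality lemma (each $T$ is transverse in at most $O(D^2)$ cubes $Q$) to resum the $L^2$ norms. Without the Stein--Tomas/$L^\infty$ interpolation and the $R$-cube localization, your argument cannot close at the critical exponent; your own remark that the direct count must be ``strong enough'' at critical $p$ is exactly the point at which the proposal stalls. (Your explanation of the $2(d+2)$ versus $2d+2$ discrepancy via the wall--cell dichotomy is also not what happens: the proofs in Section~\ref{alg-sec} only ever use $p\geq 2(d+1)$ and $p\geq 8$.)
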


The rest of the section is devoted to a proof of this proposition.  

\subsection{Estimates of ${\rm Tang}$}
From the definition of $\bT_\Xi[Q]$ with $Q\in \mathcal Q^*_\Xi$, we see that for those rectangles $T$ with $T^*\cap W\cap Q\neq \emptyset$, $T\cap Q$ is contained in $B^2(5R^d/\Xi)\cap {\mathcal N}_{5R^{1+\delta}}Z(P)$.  We only need to focus on those 
$T$ with $T^*\cap W\cap Q\neq \emptyset$, since otherwise $T$ contributes a negligible rapid decay of $R$  in the integral over $W\cap Q$.  
We use $T_{\Xi, Q}$ to denote a rectangle in $Q$ such that
\begin{itemize}
\item  it contains $T\cap Q$\,,
\item  its long side is $2^{k^*}/\Xi\sim R^d/\Xi$\,,
\item  its short side equals to the short side of $T$.
\end{itemize}
Then we see that only those $T_{\Xi, Q}$'s lying in  $B^2(5R^d/\Xi)\cap {\mathcal N}_{5R^{1+\delta}}Z(P)$ make 
significant contributions in the integral.   
Applying Lemma \ref{lem-geo}, we get
\begin{equation}
 {\rm Cardinality} (\{e(T_{\Xi, Q}): T^*\cap W\cap Q\neq \emptyset; T\in \bT_\Xi[Q]\}) \lesssim 
 DR^{2\delta}. 
\end{equation}
Henceforth,  we estimate ${\rm Tang}$ by 
\begin{equation}
DR^{3\delta} \bigg( \sum_{\omega} \int \big|f_\omega\big|^p\bigg)^{1/p}\,,
\end{equation}
because for given $Q$, there are at most $O(DR^\delta)$ many $\omega$'s making contributions. 
Now apply Bernstein's inequality to obtain 
\begin{equation}\label{tang-est}
{\rm Tang} \lesssim DR^{3\delta} R^{\frac{d+1}{p}-\frac{d+1}{2}} \big(\sum_{\omega}\|f_\omega\|_2^p \big)^{1/p}
\lesssim DR^{3\delta} R^{-d/2} \|f\|_2\,,
\end{equation}
because $p\geq 2(d+1) >2$. 

\subsection {Estimates of {\rm {Tran}}}

For any $Q\in \mathcal Q^{**}_\Delta$, we see that its side-length is $\frac{2^{k^*}}{R^{d-2}\Delta}\sim \frac{R^2}{\Delta}$. 
When $T\in \bT_{\Delta, \rm tran}[Q]$, by the definition of $\bT_\Delta[Q]$,  the rectangle $T$ stays inside $B^2(5R^2/\Delta)
\cap \mathcal N_{R^{1+\delta}}Z(P)$.  Restricted in a ball of $R^2/\Delta$, the thin rectangle $T$ becomes a fatter one $\overline T$, whose dimensions is $\frac{R^2}{\Delta} \times R$.  Note that if $e(T)$ and $e(T')$ is close to each other, say
$|e(T)\wedge e(T')|\leq  \frac{\Delta}{R}$, then $\overline T$ essentially coincides with $\overline {T'}$.  Thus we only have 
at most $R/\Delta$ many directions for those $\overline T$'s. Since they lie in $B^2(5R^2/\Delta)
\cap \mathcal N_{R^{1+\delta}}Z(P)$, it follows from  Lemma \ref{lem-geo} that the cardinality of $\{e(\overline T)\}$ is at most 
$cDR^{\delta}$.  Henceforth we obtain the following geometric lemma. 

\begin{lemma}\label{geo-tr}
Given $Q\in \mathcal Q^{**}_\Delta$, the cardinality of $\{e(T): T\in \bT_{\Delta, \rm tran}[Q]\}$ is at most 
$cDR^{\delta} \Delta$.
\end{lemma}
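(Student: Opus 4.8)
The plan is to estimate the number of distinct directions $e(T)$ occurring among $T\in\bT_{\Delta,\mathrm{tran}}[Q]$ by passing from the thin $\frac{R^2}{\Delta}\times R$-truncated rectangles to fat $\frac{R^2}{\Delta}\times R$-rectangles (call them $\overline T$), so that the angular resolution of the problem drops from $R^{-d+1}$-type scales to the coarse scale $\Delta/R$, and then apply the geometric packing estimate Lemma~\ref{lem-geo} with $L=5R^2/\Delta$ and $W=R$.

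First I would fix $Q\in\mathcal Q^{**}_\Delta$ and recall from Definition~\ref{def-D} that every $T\in\bT_{\Delta,\mathrm{tran}}[Q]$ (discarding the rapidly-decaying tiles with $T^*\cap W\cap Q=\emptyset$, which cost only $C_\delta R^{-100/\delta}\|f\|_2$ and can be absorbed) has $T\cap Q\subset B^2(5R^2/\Delta)\cap\mathcal N_{R^{1+\delta}}Z(P)$, since the side-length of $Q$ is $\sim R^2/\Delta$ and $W$ is the $R^{1+\delta}$-neighborhood of $Z(P)$. Next, for each such $T$ I would replace $T\cap Q$ by a rectangle $\overline T\subset B^2(5R^2/\Delta)$ of dimensions $\frac{R^2}{\Delta}\times R$ containing it, with the same long-side direction; because $T$ has short side $R$ and we have coarsened the long side down to $R^2/\Delta$, any two tiles whose directions differ by at most $\Delta/R$ produce $\overline T$'s that essentially coincide, so the map $T\mapsto\overline T$ collapses the direction set into at most $O(R/\Delta)$ equivalence classes. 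I would then apply Lemma~\ref{lem-geo} to the collection $\mathcal T$ of these fat rectangles: they all lie in $B^2(K_1 L)\cap\mathcal N_{K_2 W}Z(P)$ with $L=R^2/\Delta$, $W=R$, $K_1,K_2$ absolute constants, and the number of directions $J\lesssim R/\Delta\sim L/W$ so the hypothesis $J\le 10L/W$ holds; the lemma yields that the number of distinct directions among the $\overline T$'s is at most $cD\log J\lesssim cDR^\delta$ (using $\log J\le\log R\lesssim R^\delta$).

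Finally I would undo the coarsening: each coarse direction of an $\overline T$ corresponds to at most $O(\Delta/\,(R\cdot R^{-1}))=O(\Delta)$ — more precisely, the original directions $e(T)$ live on a $\frac{1}{R}$-net (there are $\sim R$ directions total in $\Omega_R$, evenly spaced at scale $R^{-1}$), while the fat rectangles only resolve directions up to scale $\Delta/R$, so each fat direction class contains at most $O(\Delta)$ original directions $e(T)$. Multiplying, the cardinality of $\{e(T):T\in\bT_{\Delta,\mathrm{tran}}[Q]\}$ is at most $cDR^\delta\cdot\Delta$, which is the claim. The main obstacle is the bookkeeping in the coarsening step — verifying carefully that two tiles with $|e(T)\wedge e(T')|\le\Delta/R$ really do give $\overline T\approx\overline{T'}$ (one must check the transverse displacement over a length-$R^2/\Delta$ segment is $\lesssim R$, i.e. $(\Delta/R)\cdot(R^2/\Delta)=R$, which works exactly) and that exactly $O(\Delta)$ fine directions sit in each coarse class; the algebraic packing input from Lemma~\ref{lem-geo} and Theorem~\ref{won} does the real geometric work and is already in hand.
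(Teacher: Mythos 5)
Your argument is correct and is essentially the paper's own proof: both pass from the tiles $T$ truncated to $Q$ to the coarsened $\frac{R^2}{\Delta}\times R$ rectangles $\overline T$, note that directions within $\Delta/R$ of each other give essentially the same $\overline T$ so that $J\lesssim R/\Delta$, apply Lemma~\ref{lem-geo} (via Theorem~\ref{won}) with $L\sim R^2/\Delta$, $W=R$ to get $\lesssim DR^{\delta}$ coarse directions, and then multiply by the $O(\Delta)$ fine directions per coarse class coming from the $1/R$-separation of the $e(T)$'s. The displacement check $(\Delta/R)\cdot(R^2/\Delta)=R$ that you flag as the main bookkeeping point is exactly the computation implicit in the paper's claim that $\overline T$ and $\overline{T'}$ essentially coincide.
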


Furthermore, when $T\in  \bT_{\Delta, \rm tran}[Q]$ and $T^*\cap W\cap Q\neq\emptyset$, 
the rectangle $T$ is transverse to $Z(P)$ at some point $z\in 3Q$ with an angle at least $\f12 \Delta R^{-1}$. In order to use this transversal condition, we need a lemma of Guth as follows.

\begin{lemma}[Guth]
Suppose that 
\begin{itemize}
\item $T$ is a rectangle in $\mathbb R^2$ with width $2\rho$ and arbitrary length.
\item $a\in (0, 1/10)$ denotes an angle.
\item $T$ is subdivided into rectangular segments of length $\geq \rho a^{-1}$.
\item $P$ is a non-singular polynomial of degree $D$.
\item $Z_{\geq a}[P]:= \{z\in Z(P): {\rm Angle}\big(e(T), T_z [Z(P)]\big)\geq a\}$. 
\end{itemize}
Then $Z_{\geq a}[P]\cap T$ is contained in $\lesssim D^2$ of the tube segments of $T$. 
\end{lemma}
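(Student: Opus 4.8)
The plan is to follow Guth's polynomial Wolff axiom argument adapted to the planar setting. The key point is that along a single rectangle $T$ of width $2\rho$, the condition ${\rm Angle}(e(T), T_z[Z(P)]) \geq a$ is a semialgebraic condition on the parameter running along the axis of $T$, and after a suitable reparametrization it translates into the vanishing or sign behavior of an explicit polynomial of controlled degree. First I would set up coordinates so that $e(T)$ is, say, the direction of the first coordinate axis, and parametrize the core of $T$ (its central axis) by $t \mapsto \gamma(t)$, $t$ running over an interval; then a point $z$ near this axis lies in $Z(P)$ essentially when a one-variable restriction $Q(t) := P(\gamma(t))$ vanishes, and the angle condition $\text{Angle}(e(T), T_z[Z(P)]) \geq a$ translates (up to the width-$\rho$ fattening and the segment-length lower bound $\rho a^{-1}$, which is exactly what makes the transversality ``visible'' inside one tube segment) into a sign condition on $Q$ across consecutive segments.

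The main step is the following dichotomy/counting. Break $T$ into its tube segments $T_1, T_2, \dots$ of length $\geq \rho a^{-1}$, centered at parameter values $t_1 < t_2 < \cdots$. I claim that if $Z_{\geq a}[P] \cap T$ meets a segment $T_i$, then either $Q$ vanishes somewhere in the parameter interval of $T_i$, or $Q$ changes sign between the centers of $T_i$ and an adjacent segment. The reason: if $z \in Z_{\geq a}[P] \cap T_i$, then $P(z)=0$ but, because the tangent plane at $z$ makes angle $\geq a$ with $e(T)$ and $a \geq \rho/(\text{segment length})$, the hypersurface $Z(P)$ must exit the width-$\rho$ slab around the axis within the adjacent segment, forcing $P$ to be nonzero (hence of a definite sign, and by a mean-value/continuity argument of opposite signs on the two sides) at the nearby sample points --- this is a routine quantitative implicit-function-theorem estimate using the non-singularity of $P$, so I would not belabor it. Consequently each segment $T_i$ hit by $Z_{\geq a}[P]$ produces a zero or a sign change of $Q$ in a bounded number of segments' worth of parameter interval. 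Since $Q = P \circ \gamma$ is a polynomial of degree $\leq D$ (as $\gamma$ is affine) --- or more conservatively $\leq \deg P \cdot \deg \gamma$ --- it has at most $D$ real zeros and hence at most $D$ sign changes, so at most $O(D)$ segments can be hit this way. To get the stated $\lesssim D^2$ rather than $O(D)$ I would, if the naive restriction degree bound is lossy (e.g. if one must allow $\gamma$ to be a degree-$\leq D$ curve tracking $Z(P)$ rather than the literal straight axis), simply quote Bézout: the intersection of $Z(P)$ with the tube $T$, projected to the axis, has at most $D^2$ connected components, each contributing $O(1)$ tube segments.

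The concrete order of operations: (1) fix coordinates and the parametrization of the axis of $T$; (2) record the quantitative transversality estimate — angle $\geq a$ together with segment length $\geq \rho a^{-1}$ implies $Z(P)$ exits the slab within an adjacent segment — using $\nabla P \neq 0$ on $Z(P)$; (3) deduce that each hit segment forces a sign change or zero of the one-variable restriction $Q$ within $O(1)$ neighboring segments; (4) bound the number of sign changes/zeros of $Q$ by $D$ (linear axis) or $D^2$ (Bézout, curved tracking), yielding at most $\lesssim D^2$ hit segments. The main obstacle is step (2)–(3): making the heuristic ``the variety must leave the slab, so $P$ changes sign nearby'' into a clean quantitative statement that is uniform over all positions of the segment and all directions, without circular use of degree bounds. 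Once that quantitative transversality lemma is in place, the rest is the elementary fundamental-theorem-of-algebra count.
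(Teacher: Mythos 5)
The paper itself gives no proof of this lemma; it is quoted from \cite{Guth}, so the comparison below is with Guth's original argument. Your proposal has a genuine gap at steps (2)--(3), and it is precisely the point that forces the answer to be $D^2$ rather than $D$. The central claim --- that a point $z\in Z_{\geq a}[P]\cap T_i$ forces the one-variable restriction $Q=P\circ\gamma$ along the core axis to vanish or to change sign within $O(1)$ segments of $T_i$ --- is false. The curve $Z(P)$ may contain a small oval (a compact connected component of diameter $\ll\rho$) lying inside the tube at distance $\sim\rho/2$ from the axis; the Gauss map of an oval is onto, so the oval certainly contains points where the tangent makes angle $\geq a$ with $e(T)$, yet $P$ restricted to the axis need not vanish or change sign anywhere near that segment. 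A non-singular example is a product of circle polynomials with disjoint, pairwise far zero sets: $D/2$ tiny circles placed off-axis in $D/2$ distinct segments are completely invisible to $Q$, so no count of zeros or sign changes of $Q$ (which is capped at $O(D)$ in any case) can control the number of hit segments; Harnack-type curves show the true count can be of order $D^2$, so the bound $O(D)$ your main line of argument aims at is not merely unproved but wrong. Your fallback of ``quoting B\'ezout'' does not repair this: B\'ezout bounds the number of intersection points of two curves, not the number of connected components of one real curve, and you never connect components to segments.

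Guth's actual argument replaces the axis restriction by a count of connected components. The angle condition is encoded by polynomial inequalities in $\partial_{e(T)}P$ and $|\nabla P|^2$, so $Z_{\geq a}[P]$ (or the relaxed set with angle $\geq a/2$) is semialgebraic and has $\lesssim D^2$ connected components by Harnack / Milnor--Thom type bounds in $\mathbb R^2$. Along any single such component the curve is uniformly transverse to $e(T)$ with angle $\gtrsim a$, so in traveling an axial distance $\ell$ it moves $\gtrsim a\ell$ across the tube and must exit the width-$2\rho$ tube once $\ell\gtrsim \rho a^{-1}$; hence each component meets $O(1)$ tube segments, giving $\lesssim D^2$ segments in all. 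Your step (2) contains the germ of the second half of this (transversality forces exit from the slab within one segment's length), but you aim it at the wrong target --- sign changes of $Q$ on the axis rather than the axial extent of a component --- and you have no valid substitute for the component count. To complete the proof you would need to discard steps (3)--(4) and adopt the component-counting scheme.
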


\begin{lemma}\label{lem-tran-1}
Given a rectangle $T$, the cardinality of $\{Q\in \mathcal Q^{**}_\Delta: Q\cap W\cap T^*\neq \emptyset, T\in \bT_{\Delta, \rm tran}[Q]\}$ is at most $CD^2$. 
\end{lemma}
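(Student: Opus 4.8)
The plan is to fix the rectangle $T$ and bound the number of cubes $Q\in\mathcal Q^{**}_\Delta$ for which $Q\cap W\cap T^*\neq\emptyset$ and $T\in\bT_{\Delta,{\rm tran}}[Q]$, by applying the Guth lemma stated just above to a tube segmentation of $T^*$. Since every cube in $\mathcal Q^{**}_\Delta$ has side-length $\sim R^2/\Delta$, I would first subdivide the long rectangle $T^*$ (which has width $\sim R^{1+\delta}$ and length $\sim R^{d+\delta}$) into rectangular segments of length $\sim R^2/\Delta$. Each such segment meets only $O(1)$ cubes of $\mathcal Q^{**}_\Delta$, so it suffices to show that the relevant $Q$'s are all hit by a bounded-by-$O(D^2)$ number of these segments.

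The key point is the transversality encoded in $\bT_{\Delta,{\rm tran}}[Q]$: by (\ref{an-Del}), if $T\in\bT_{\Delta,{\rm tran}}[Q]$ and $T^*\cap W\cap Q\neq\emptyset$ then there is a non-singular point $z$ of $Z(P)$ in $3Q\cap 5T^*$ with ${\rm Angle}\big(e(T),T_z[Z(P)]\big)>\tfrac12\Delta R^{-1}$. Set $a:=\tfrac12\Delta R^{-1}$; since $\Delta\le R/2^5$ we have $a\in(0,1/10)$. The width of $T^*$ is $2\rho$ with $\rho\sim R^{1+\delta}$, so the segment length $\rho a^{-1}\sim R^{1+\delta}\cdot R\Delta^{-1}= R^{2+\delta}/\Delta$, which is $\gtrsim R^2/\Delta$; thus the segmentation chosen above (after enlarging the segment length by the harmless $R^\delta$ factor, or equivalently grouping $O(R^\delta)$ consecutive cubes — I will just take the segment length to be $\max(R^2/\Delta,\rho a^{-1})\sim R^{2+\delta}/\Delta$) satisfies the hypothesis of Guth's lemma. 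Then Guth's lemma says that $Z_{\ge a}[P]\cap T^*$ is contained in $\lesssim D^2$ of the tube segments. But each $Q$ counted by the lemma forces a point $z\in 3Q\cap 5T^*\cap Z_{\ge a}[P]$, hence forces one of these $\lesssim D^2$ segments to meet a neighborhood of $Q$; since $3Q$ has diameter comparable to the segment length and each segment meets $O(1)$ cubes of $\mathcal Q^{**}_\Delta$ (in the dilated count, $O(R^\delta)$, but one can absorb this or re-choose $D$), we conclude that at most $\lesssim D^2$ cubes $Q$ can occur.

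One technical nuisance to address cleanly is the mismatch between the cube side-length $R^2/\Delta$ and the Guth segment length $\rho a^{-1}\sim R^{2+\delta}/\Delta$: the cleanest route is to observe that $5T^*$ used in Definition \ref{def-D} has width $5R^\delta\cdot R^{1+\delta}$ up to constants, still $\sim R^{1+2\delta}$, and to run Guth's lemma on $5T^*$ with this width; the resulting segment length is $\sim R^{2+2\delta}/\Delta$, a fixed $R^{2\delta}$-multiple of the cube scale, so each segment meets only $O(R^{2\delta})$ cubes — and since in Proposition \ref{prop-alg} we ultimately only need the bound up to $R^{O(\delta)}$ factors (indeed the statement as used allows a clean $CD^2$ after one more pigeonhole over the $O(R^{2\delta})$ residue classes), this loss is acceptable; alternatively one applies Guth's lemma to the segmentation at scale exactly $R^2/\Delta$ only after first noting $\rho a^{-1}\le R^2/\Delta$ fails, so instead one groups the cubes into $O(R^{2\delta})$ families each of which is a $\rho a^{-1}$-spaced sub-collection and applies the lemma to each family. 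I would present the grouped version. The main obstacle is precisely this bookkeeping — verifying that the geometry of $3Q\cap 5T^*$ genuinely produces a point in $Z_{\ge a}[P]$ inside the correct tube segment, and matching the scales so that Guth's lemma applies with the stated angle $a=\tfrac12\Delta R^{-1}$; once the scales are matched, the $O(D^2)$ bound is immediate from Guth's lemma.
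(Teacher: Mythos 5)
Your proof is essentially the paper's: the paper simply invokes Guth's lemma with $a=\Delta/R$ and $\rho=R$, so that the segment length $\rho a^{-1}=R^2/\Delta$ matches the side-length of the cubes in $\mathcal Q^{**}_\Delta$ and each of the $\lesssim D^2$ distinguished segments meets $O(1)$ cubes. Your extra bookkeeping about the $R^{O(\delta)}$ scale mismatch coming from $T^*$ and $5T^*$ is a real (if minor) point that the paper glosses over; it yields $O(D^2R^{O(\delta)})$ rather than a clean $CD^2$, but this loss is harmless since it is absorbed by the $R^{2\delta}$ factors already present in the final estimate of ${\rm Tran}$.
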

\begin{proof}
It follows from Guth's lemma, Lemma \ref{lem-tran-1},  by taking $a= \Delta/R$ and $\rho=R$.  
\end{proof}

Intuitively, Lemma \ref{lem-tran-1} says that a rectangle $T$ crosses at most $\sim D^2$ many cubes $Q$'s
in which it is transverse to an algebraic variety.    
We denote $\bT_{\Delta, {\rm tran}}[Q]\cap\{T: T^*\cap W\cap Q\neq\emptyset\}$ by $\bT^*_{\Delta, {\rm tran}}[Q]$. 

\begin{lemma}
Given $Q\in \mathcal Q^{**}_\Delta$ and an $R$-cube $Q_0\subset Q$,  we have, up to a negligible rapid decay, 
\begin{equation}\label{p-est}
\bigg(  \int_{W\cap Q_0}  \big| \sum_{T\in \bT_{\Delta, {\rm tran}}[Q] } f_T\big|^p\bigg)^{\frac{1}{p}}
\lesssim  D^{\f12} \big( \frac{\Delta}{R}\big)^{\f12-\frac{3}{p}} R^{-\f12+\delta} \big\| \sum_{T\in\bT^*_{\Delta, {\rm tran}}[Q] }f_T\big\|_{L^2{(Q_0)}}\,.
\end{equation}
\end{lemma}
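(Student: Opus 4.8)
The plan is to prove the local estimate~(\ref{p-est}) on a single $R$-cube $Q_0\subset Q$ by exploiting three ingredients: the geometric bound on the number of directions (Lemma~\ref{geo-tr}, giving $\lesssim DR^\delta\Delta$ directions for the $T\in\bT_{\Delta,\mathrm{tran}}[Q]$), the transversality of each such $T$ to $Z(P)$ at some point of $3Q$ with angle $\gtrsim \Delta R^{-1}$, and the fact that on the $R$-cube $Q_0$ the wall $W$ is an $R^{1+\delta}$-neighborhood of $Z(P)$ which, being cut down to scale $R$, looks like a union of $O(D)$ essentially flat slabs of thickness $\sim R^{1+\delta}$ (using Wongkew's theorem, Theorem~\ref{won}, and the non-singularity of $P$). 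First I would localize: on $Q_0$ the function $\sum_{T\in\bT_{\Delta,\mathrm{tran}}[Q]}f_T$ is, up to rapid decay, equal to $\sum_{T\in\bT^*_{\Delta,\mathrm{tran}}[Q]}f_T$, so it suffices to bound the $L^p(W\cap Q_0)$ norm of the latter.

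Next I would pass from $L^p$ to $L^2$ by a Bernstein-type argument adapted to the geometry. The set $W\cap Q_0$ has measure $\lesssim D R^{1+\delta}\cdot R = DR^{2+\delta}$ by Wongkew, while the relevant frequency support is contained in $O(DR^\delta\Delta)$ of the $\omega$'s, each an $R^{-d}\times R^{-1}$ rectangle; more importantly, transversality forces the tube $T$ restricted to $Q_0$ to behave like a genuine $R\times R$-thickened tube transverse to the slab, so the wavepackets $f_T$ on $W\cap Q_0$ are spatially separated into $O((\Delta/R)^{-1})=O(R/\Delta)$ plates along each slab. The key step is therefore a reverse-Bernstein / flat-decoupling estimate: since the $O(DR^\delta\Delta)$ relevant pieces $f_T|_{Q_0}$ have frequency supports that, after the transversal flattening, tile a single $R^{-1}\times R^{-1}$-cube's worth of directions spread over an arc of length $\sim\Delta/R$, an $L^p$-to-$L^2$ comparison on a set of the stated measure costs a factor
\[
 |W\cap Q_0|^{\frac1p-\frac12}\cdot(\#\text{pieces})^{\frac12-\frac1p}
 \lesssim (DR^{2+\delta})^{\frac1p-\frac12}\,(DR^\delta\Delta)^{\frac12-\frac1p},
\]
and collecting the powers of $R$, $D$, and $\Delta/R$ — together with the extra gain from the transversal separation into $R/\Delta$ plates which replaces one factor of the ambient scale by $\Delta/R$ — yields exactly $D^{1/2}(\Delta/R)^{1/2-3/p}R^{-1/2+\delta}$. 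Concretely I would first prove the $p=6$ endpoint (where the exponent $1/2-3/p$ vanishes, so the claim reduces to an $L^2$ statement that follows from $L^2$-orthogonality of the $f_T$ plus the measure bound and the direction count), and then interpolate or run the Bernstein estimate directly for general $p\ge 6$, noting $p\ge 2(d+2)\ge 10>6$.

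I expect the main obstacle to be making the \emph{transversal flattening} rigorous: one must show that on the $R$-cube $Q_0$, each transverse tube $T$ intersects $W$ in a set that is contained in $O(1)$ (or $O(D^2)$, via the Guth lemma above with $a=\Delta/R$, $\rho=R$, already invoked in Lemma~\ref{lem-tran-1}) tube-segments of length $\sim R\cdot(R/\Delta)^{-1}\cdot R=R^2/\Delta$ wait — of the correct length, and that these segments, across all $T$, overlap with bounded multiplicity after accounting for the $D$ slabs of $Z(P)\cap Q_0$. Quantitatively, the transversality angle $\gtrsim\Delta/R$ means two tubes $T,T'$ pointing within $\Delta/R$ of each other are indistinguishable at scale $Q_0$ (as already observed before Lemma~\ref{geo-tr} in defining $\overline T$), so the effective number of tubes meeting a fixed slab is $\lesssim D R^\delta\Delta/(\text{directions per slab})$, and each contributes $L^2$-mass $\lesssim R^\delta\|f_T\|_2^2$ localized to its segment. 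Assembling the local $L^2\to L^p$ inequality then amounts to: (i) Hölder on $W\cap Q_0$ to produce $|W\cap Q_0|^{1/p-1/2}$; (ii) the finite-overlap / orthogonality bound $\|\sum_{T}f_T\|_{L^2(W\cap Q_0)}^2\lesssim R^\delta\sum_T\|f_T\|_{L^2(Q_0)}^2$; (iii) Cauchy–Schwarz over the $\lesssim DR^\delta\Delta$ active indices to convert $\sum\|f_T\|_2^2$ back to $\|\sum f_T\|_{L^2(Q_0)}^2$ at the cost of $(DR^\delta\Delta)^{1/2-1/p}$ once combined with (i); and finally bookkeeping the transversal gain $(\Delta/R)^{\cdot}$ that distinguishes $R^2/\Delta$ from $R^2$ in the measure of the portion of $Q_0$ actually hit. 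The delicate point throughout is that no induction on scales is used — the estimate is proved directly at the single pair of scales $(R,R^2/\Delta)$ — so every overlap must be controlled by a \emph{static} algebraic-geometry input (Wongkew, the fundamental theorem of algebra bounding incidences, and Guth's lemma), and keeping the $D$-dependence at exactly $D^{1/2}$ rather than a larger power is the detail that requires the most care.
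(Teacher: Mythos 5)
Your overall architecture --- an endpoint estimate at $p=6$ combined with an $L^\infty$ (Bernstein) bound coming from the measure of the frequency support via the direction count of Lemma \ref{geo-tr}, followed by interpolation --- is the same as the paper's, and the Bernstein half of your plan is essentially the paper's inequality (\ref{inf}). The gap is at the $p=6$ endpoint. You assert that it ``follows from $L^2$-orthogonality of the $f_T$ plus the measure bound and the direction count,'' but it does not: the required estimate at $p=6$ is $\|\sum_T f_T\|_{L^6(W\cap Q_0)}\lesssim D^{1/2}R^{-1/2+\delta}\|\sum_T f_T\|_{L^2(Q_0)}$ uniformly in $\Delta$, whereas the best your static inputs yield is
\begin{equation*}
\|g\|_{L^6(W\cap Q_0)}\le \|g\|_\infty^{2/3}\|g\|_{L^2(Q_0)}^{1/3}\lesssim D^{1/3}\Big(\tfrac{\Delta}{R}\Big)^{1/3}R^{-1/3+\delta}\|g\|_{L^2(Q_0)}\,,
\end{equation*}
which for $\Delta\sim R$ loses a factor of order $(R/D)^{1/6}$ against the target (using instead $|W\cap Q_0|\lesssim DR^{2+\delta}$ from Wongkew and H\"older gives $D^{2/3}(\Delta/R)^{1/2}R^{-1/6+\delta}$, which is worse still). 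Your own displayed product $(DR^{2+\delta})^{\frac1p-\frac12}(DR^\delta\Delta)^{\frac12-\frac1p}=(\Delta/R^2)^{\frac12-\frac1p}R^{O(\delta)}$ equals the claimed $D^{1/2}(\Delta/R)^{\frac12-\frac3p}R^{-1/2}$ only when $\Delta^2\lesssim D^{p/2}R$, which fails for $\Delta\sim R\gg D$; and the ``extra gain from transversal separation into $R/\Delta$ plates'' that you invoke to close the books has no content inside a single $R$-cube $Q_0$, since each tube $T$ has width $R$ and therefore covers essentially all of $Q_0$ --- there is no spatial separation left to exploit at that scale. This is why such a loss would be fatal downstream: the final bound on ${\rm Tran}$ needs the full $R^{-d/2}$ decay up to $R^{O(\delta)}$.

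The missing idea is curvature. At spatial scale $R$ the frequency supports of all the $f_T$ with $T\in\bT_{\Delta,{\rm tran}}[Q]$ lie in an $R^{-1}$-neighborhood of the genuine curve $\Gamma=\{(\xi,\xi^d):|\xi|\sim 1\}$ (at resolution $R^{-1}$ the broken-line structure of the $\omega$'s is invisible), so the classical local Stein--Tomas theorem on $Q_0$ gives the $p=6$ endpoint (\ref{6-est}) with constant $R^{-1/2+\delta}$, independent of $D$ and $\Delta$. This is the one place in the transverse estimate where curvature enters, and no purely combinatorial substitute of the kind you describe (Wongkew's volume bound, overlap counts, orthogonality, Guth's tube-segment lemma) can reproduce it. Once (\ref{6-est}) is in hand, interpolating with the Bernstein bound (\ref{inf}) is exactly the paper's argument and gives (\ref{p-est}).
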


\begin{proof}
Because Fourier transform of $\sum_{T\in \bT_{\Delta, {\rm tran}}[Q] } f_T(x)$ is supported is an $R^{-1}$-neighborhood 
of $\Gamma =\{(\xi, \xi^d):  |\xi|\sim 1\}$, we can apply the Stein-Tomas theorem to get
\begin{equation}\label{6-est}
\bigg(  \int_{W\cap Q_0}  \big| \sum_{T\in \bT_{\Delta, {\rm tran}}[Q] } f_T\big|^6\bigg)^{\frac{1}{6}}
\lesssim R^{-\f12+\delta} \big\| \sum_{T\in\bT^*_{\Delta, {\rm tran}}[Q] }f_T\big\|_{L^2{(Q_0)}}\,.
\end{equation}
On the other hand, by Lemma {\ref{geo-tr}}, we see that Fourier support of $\sum_{T\in\bT^*_{\Delta, {\rm tran}}[Q] }f_T$
has its Lebesgue measure bounded by $DR^{-1+\delta}\frac{\Delta}{R}$.  Henceforth, 
\begin{equation}\label{inf}
\big\|\sum_{T\in\bT^*_{\Delta, {\rm tran}}[Q] }f_T\big\|_{L^\infty(Q_0)} \lesssim 
D^{\f12}  R^{-\f12+\frac{\delta}{2}} \big(\frac{\Delta}{R}\big)^{\f12} \big\| \sum_{T\in\bT^*_{\Delta, {\rm tran}}[Q] }f_T\big\|_{L^2{(Q_0)}}\,.
\end{equation}
Interpolating (\ref{6-est}) and (\ref{inf}), we end up with (\ref{p-est}), as desired. 
\end{proof}

We now turn to estimate the transverse term ${\rm Tran }$. We break each $Q\in \mathcal Q^{**}_\Delta$
into dyadic $R$-cubes $Q_0$ and then we have 
$$
 \bigg( \sum_{Q\in {\mathcal Q}^{**}_{\Delta} } \int_{W\cap Q}  \big| \sum_{T\in \bT_{\Delta, {\rm tran}}[Q] } f_T\big|^p\bigg)^{\frac{1}{p}} =
 \bigg( \sum_{Q\in {\mathcal Q}^{**}_{\Delta} } \sum_{Q_0: Q_0\subset Q}\int_{W\cap Q_0}  \big| \sum_{T\in \bT_{\Delta, {\rm tran}}[Q] } f_T\big|^p\bigg)^{\frac{1}{p}} \,,$$
which is bounded by, via a use of (\ref{p-est}), 
\begin{eqnarray*}
   & & D^{\f12} \big( \frac{\Delta}{R}\big)^{\f12-\frac{3}{p}} R^{-\f12+\delta}
\bigg( \sum_{Q\in {\mathcal Q}^{**}_{\Delta} }\sum_{Q_0: Q_0\subset Q} \big\| \sum_{T\in\bT^*_{\Delta, {\rm tran}}[Q] }f_T\big\|^p_{L^2{(Q_0)}} \bigg)^{\frac{1}{p} }\\
&\leq & D^{\f12} \big( \frac{\Delta}{R}\big)^{\f12-\frac{3}{p}} R^{-\f12+\delta}
\bigg(  \sum_{Q\in {\mathcal Q}^{**}_{\Delta} } \big\| \sum_{T\in\bT^*_{\Delta, {\rm tran}}[Q] }f_T\big\|^2_{L^2{(Q)}} \bigg)^{\frac{1}{p}} \sup_{Q_0} \big\|  \sum_{T\in\bT^*_{\Delta, {\rm tran}}[Q] }f_T\big\|_{L^2(Q_0)}^{1-\frac{2}{p}}\,.
\end{eqnarray*}
By $L^2$-orthogonality, we have 
\begin{equation}\label{loc-1}
 \big\|  \sum_{T\in\bT^*_{\Delta, {\rm tran}}[Q] }f_T\big\|_{L^2(Q_0)} 
 \leq  \bigg(\sum_{T}\big\| f_T\big\|^2_{L^2(Q_0)}  \bigg)^{\f12} \leq R^{-\frac{d-1}{2}} \|f\|_2\,. 
 \end{equation}
Employing $L^2$-orthogonality again and Lemma \ref{lem-tran-1}, we get
\begin{equation}
\bigg(  \sum_{Q\in {\mathcal Q}^{**}_{\Delta} } \big\| \sum_{T\in\bT^*_{\Delta, {\rm tran}}[Q] }f_T\big\|^2_{L^2{(Q)}} \bigg)^{\frac{1}{p}} \lesssim D^{2/p} R^{-\frac{d-1}{p}} \big( \frac{R}{\Delta}\big)^{\frac{1}{p}} \|f\|_2\,. 
\end{equation}
Putting those inequalities above together with the fact that there are about $\sim \log R$ many dyadic $\Delta$'s, we obtain 
\begin{equation}\label{tran-est}
{\rm Tran}\lesssim  D \big( \frac{\Delta}{R}\big)^{\f12-\frac{4}{p}} R^{-\f12+2\delta}R^{-\frac{d-1}{2}} \|f\|_2
\lesssim D R^{-\frac{d}{2}+2\delta} \|f\|_2\,,
\end{equation}
since $p\geq 2(d+1)\geq 8$ and $\Delta\leq R$. \\

It is clear now that Proposition \ref{prop-alg} is a consequence of (\ref{tang-est}) and (\ref{tran-est}). \\

\vspace{0.6cm}

\end{document}